\def\ps@pprintTitle{%
 \let\@oddhead\@empty
 \let\@evenhead\@empty
 \def\@oddfoot{\centerline{\thepage}}%
 \let\@evenfoot\@oddfoot}
\newtheorem{theorem}{Theorem}
\newtheorem*{thma}{Theorem A}
\newtheorem*{thmb}{Theorem B}
\newtheorem*{propa}{Proposition A}
\newtheorem*{propb}{Proposition B}
\newtheorem{definition}{Definition}
\newtheorem{lemma}{Lemma}
\newtheorem{corollary}{Corollary}
\newtheorem*{remark}{Remark}
\newcommand*\xbar[1]{%
  \hbox{%
    \vbox{%
      \hrule height 0.5pt 
      \kern0.4ex
      \hbox{%
        \kern-0.15em
        \ensuremath{#1}%
        \kern-0.15em
      }%
    }%
  }%
}
\begin{document}
\begin{frontmatter}

\title{On the density of branching Brownian motion in subcritical balls}

\author{Mehmet \"{O}z}
\ead{mehmet.oz@ozyegin.edu.tr}
\ead[url]{https://faculty.ozyegin.edu.tr/mehmetoz/}

\address{Department of Natural and Mathematical Sciences, Faculty of Engineering, \"{O}zye\u{g}in University, Istanbul, Turkey}

\begin{abstract}
We study the density of the support of a dyadic $d$-dimensional branching Brownian motion (BBM) in subcritical balls in $\mathbb{R}^d$. Using elementary geometric arguments and an extension of a previous result on the probability of absence of the support of BBM in linearly moving balls of fixed size, we obtain sharp asymptotic results on the degree of density of the support of BBM in subcritical balls. As corollaries, we obtain almost sure results about the large-time behavior of $r(t)$-enlargement of the support of BBM when the shrinking radius $r(t)$ is decaying sufficiently slowly. As a by-product, we obtain the lower tail asymptotics for the mass of BBM falling in linearly moving balls of exponentially shrinking radius, which is of independent interest. 
\end{abstract}

\vspace{3mm}

\begin{keyword}
Branching Brownian motion \sep density \sep large deviations \sep neighborhood recurrence 
\vspace{3mm}
\MSC[2010] 60J80 \sep 60F10 \sep 92D25
\end{keyword}

\end{frontmatter}

\pagestyle{myheadings}
\markright{Density of BBM\hfill}

\section{Introduction}\label{intro}

The setting in this paper is a branching Brownian motion (BBM) evolving in $\mathbb{R}^d$. It is well-known that typically the \emph{mass}, i.e., number of particles, of a BBM grows exponentially with time. To be precise, if $N_t$ denotes the total mass of a strictly dyadic BBM at time $t$, and $\beta$ is the branching rate, then 
$$\underset{t\rightarrow\infty}{\lim}N_t\, e^{-\beta t}=M>0 \quad \text{a.s.}$$ 
meaning that the limit exists and is positive almost surely. It is also known that the \emph{speed} of a strictly dyadic BBM is $\sqrt{2\beta}$, which means that typically for large time the support of BBM at time $t$ is contained in $B(0,\sqrt{2\beta}(1+\varepsilon)t)$, where we use $B(x,r)$ to denote a ball of radius $r$ and center $x$, but not contained in $B(0,\sqrt{2\beta}(1-\varepsilon)t)$ for any $0<\varepsilon<1$. Let us call $B(0,\sqrt{2\beta}(1-\varepsilon)t)$ a subcritical ball (see Definition~\ref{subcritical}). Then, a natural question concerns the spatial distribution of mass at time $t$: how homogeneously are the exponentially many particles spread out over a subcritical ball? If they are spread out sufficiently homogeneously, then one may formulate this in terms of the density of the support of BBM, and obtain quantitative results on the degree of density of BBM. This work presents fine results on the distribution of particles of BBM at time $t$ for large $t$, and mainly aims at answering the question of how dense the support of BBM is in subcritical balls. 

We first extend a previous result \cite[Corollary 2]{O2018} on the probability of absence of BBM in moving balls of fixed radius  to moving balls of time-dependent radius; then using this result and elementary geometric arguments, we obtain a large deviation result on the asymptotic behavior of the density of the support of BBM in subcritical balls. As corollaries, we show that for a suitably decreasing function $r:\mathbb{R}_+\to\mathbb{R}_+$, the $r(t)$-enlargement of the support of BBM at time $t$ fills up the entire subcritical zone asymptotically as $t\rightarrow\infty$, and obtain almost sure results on its volume.

\subsection{Formulation of the problem}

Let $Z=(Z(t))_{t\geq 0}$ be a $d$-dimensional strictly dyadic BBM with constant branching rate $\beta>0$. Here, $t$ represents time, and strictly dyadic means that every time a particle branches, it gives exactly two offspring. The process starts with a single particle, which performs a Brownian motion in $\mathbb{R}^d$ for a random exponential time of parameter $\beta>0$, at which the particle dies and simultaneously gives birth to two offspring. Similarly, starting from the position where their parent dies, each offspring particle repeats the same procedure as their parent independently of others and of the parent, and the process evolves through time in this way. The Brownian motions and exponential lifetimes of particles are all independent from one another. For each $t\geq 0$, $Z(t)$ can be viewed as a discrete measure on $\mathbb{R}^d$. Let $P_x$ and $E_x$, respectively, denote the probability and corresponding expectation for $Z$ when the process starts with a single particle at position $x\in\mathbb{R}^d$, that is, when $Z(0)=\delta_x$, denoting the Dirac measure at $x$. When $Z(0)=\delta_0$, we simply use $P$ and $E$. For a Borel set $B\subseteq \mathbb{R}^d$ and $t\geq 0$, we write $Z_t(B)$ to denote the number of particles, i.e., the \textit{mass}, of $Z$ that fall inside $B$ at time $t$. We write $N_t:=Z_t(\mathbb{R}^d)$ for the total mass at time $t$. The \emph{range} of $Z$ up to time $t$, and the full range of $Z$, are defined respectively as 
\begin{equation} \label{range}
R(t)=\underset{0\leq s\leq t}{\bigcup} \text{supp}(Z(s)),\quad \quad R=\underset{t\geq 0}{\bigcup} R(t).
\end{equation}

By the classical result of McKean \cite{MK1975}, it is well-known that the \emph{speed} of strictly dyadic BBM in one dimension is equal to $\sqrt{2\beta}$, which was later generalized to higher dimensions by Engl\"ander and den Hollander \cite{E2003}. More precisely, we have the following result.
\begin{thma}[Speed of BBM; \cite{MK1975,E2003}] 
Let $Z$ be a strictly dyadic BBM in $\mathbb{R}^d$. For $t\geq 0$ define $M_t:=inf\{r>0:\emph{supp}(Z(t))\subseteq B(0,r)\}$ to be the radius of the minimal ball that contains the support of BBM at time $t$. Then, in any dimension, 
$$M_t/t \rightarrow \sqrt{2\beta} \quad \text{in probability} \quad \text{as} \quad t\rightarrow\infty.$$
\end{thma}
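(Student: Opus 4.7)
The plan is to prove the convergence in probability by establishing matching one-sided bounds: for every $\varepsilon\in(0,1)$ I would show that both $P(M_t > (\sqrt{2\beta}+\varepsilon)t)\to 0$ and $P(M_t < (\sqrt{2\beta}-\varepsilon)t)\to 0$ as $t\to\infty$, which together give $M_t/t\to\sqrt{2\beta}$ in probability.

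For the upper bound, I would carry out a first-moment computation. Since particles are born and move independently, the expected number of particles of $Z$ falling outside $B(0,rt)$ at time $t$ equals $E[N_t]\cdot P_0(|W_t|>rt) = e^{\beta t}\,P_0(|W_t|>rt)$, where $W$ is a standard $d$-dimensional Brownian motion. I would then use the Gaussian tail bound $P_0(|W_t|>rt)\le C\,t^{(d-2)/2}\,e^{-r^2 t/2}$ and choose $r=\sqrt{2\beta}+\varepsilon$, so that the expectation decays exponentially in $t$. An application of Markov's inequality to the integer-valued random variable $Z_t(B(0,rt)^c)$ then yields $P(M_t>rt)\to 0$.

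For the lower bound, my plan is to reduce to the one-dimensional case by projection. Let $\widetilde{Z}$ denote the process obtained by projecting the position of each particle of $Z$ onto the first coordinate axis. Since the first coordinate of a $d$-dimensional Brownian motion is itself a one-dimensional Brownian motion, and branching is preserved under projection, $\widetilde{Z}$ is a strictly dyadic one-dimensional BBM with branching rate $\beta$. McKean's classical result then gives that the rightmost particle $R_t$ of $\widetilde{Z}$ satisfies $R_t/t\to\sqrt{2\beta}$ in probability. Since the support of $\widetilde{Z}(t)$ is contained in $[-M_t,M_t]$, one has $M_t\ge R_t$, and hence $P(M_t<(\sqrt{2\beta}-\varepsilon)t)\le P(R_t<(\sqrt{2\beta}-\varepsilon)t)\to 0$.

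The main obstacle is clearly the lower bound, which ultimately rests on the $d=1$ result of McKean. Once the one-dimensional theorem is taken as a black box, the projection argument reduces the $d$-dimensional statement to a single line; the upper bound, by contrast, is elementary and requires only the Gaussian tail estimate together with the growth rate $E[N_t]=e^{\beta t}$ of the mean mass. One conceptual subtlety worth emphasizing is that convergence in probability (rather than almost sure convergence) is the right mode here, because the upper bound relies on the Borel--Cantelli direction along a continuous time parameter, which would require an additional monotonicity or union-bound step over a discrete skeleton to be strengthened.
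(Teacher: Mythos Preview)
The paper does not prove Theorem~A at all: it is stated as a classical result with citations to McKean (for $d=1$) and Engl\"ander--den Hollander (for general $d$), and is used only as background motivation for the notion of a subcritical ball. So there is no proof in the paper against which to compare your proposal.

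That said, your sketch is essentially correct and is a standard way to obtain the statement. The upper bound via the many-to-one formula and a Gaussian tail estimate is clean; the lower bound by projecting onto a coordinate axis and invoking McKean's one-dimensional theorem is exactly the kind of reduction that makes the $d$-dimensional statement an immediate corollary of the $d=1$ case. One small comment: your final paragraph about Borel--Cantelli and continuous time is a bit muddled. Since you are proving convergence \emph{in probability}, no Borel--Cantelli step is needed at all; for each fixed $t$ you have shown the two tail probabilities are small, and that is the end of the argument. The issue you allude to would only arise if you wanted to upgrade to almost sure convergence, and even then the upper bound can be handled by a discrete skeleton plus a maximal inequality for Brownian motion, rather than by any subtlety about the ``Borel--Cantelli direction.''
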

Note that $M_t$ quantifies the spatial spread of BBM at time $t$ so that $M_t/t$ is a measure of the speed of BBM. More sophisticated results on the speed of BBM, such as almost sure results and higher order sublinear corrections, exist in the literature (see for example \cite{B1978,K2005}). For our purposes, Theorem A suffices; it says that typically for large $t$ and any $\varepsilon>0$, at time $t$ there will be particles outside $B(0,\sqrt{2\beta}(1-\varepsilon)t)$ but no particles outside $B(0,(\sqrt{2\beta}(1+\varepsilon)t)$. Therefore, when we study the density of the support of BBM at time $t$, to obtain meaningful results, we consider the density within a \emph{subcritical ball}, which we define as follows.  
\begin{definition}[Subcritical ball]\label{subcritical} \label{def1}
We call $B=(B(0,\rho_t))_{t\geq 0}$ a \emph{subcritical ball} if there exists $0<\varepsilon<1$ and time $t_0$ such that $B(0,\rho_t)\subseteq B\left(0,\sqrt{2\beta}(1-\varepsilon)t\right)$ for all $t\geq t_0$.
\end{definition}
\begin{remark} We use the term \emph{subcritical ball} both in the sense of a time-dependent ball $B=(B(0,\rho_t))_{t\geq 0}$ as in Definition~\ref{def1}, and also simply as a snapshot taken of a time-dependent ball at a fixed large time $t$ as $B(0,\rho_t)$.
\end{remark}

The current work is motivated by, and can be viewed as an extension of the following previous result. For a Borel set $B$ and $x\in\mathbb{R}^d$, we write $B+x:=\{y+x:y\in B\}$ in the sense of sum of sets. 
\begin{thmb}[Asymptotic probability of no particle inside a moving ball; \cite{O2018}]\label{thma}
Let $0\leq\theta<1$ and $B$ be a fixed ball in $\mathbb{R}^d$. Let $\textbf{e}$ be the unit vector in the direction of the center of $B$ if $B$ is not centered at the origin; otherwise let $\textbf{e}$ be any unit vector. For $t\geq 0$ define $B_t=B+\theta\sqrt{2\beta}t\textbf{e}$. Then, 
\begin{equation} \underset{t\rightarrow\infty}{\lim}\:\frac{1}{t}\log P\left(Z_t(B_t)=0\right)=-2\beta\left(\sqrt{2}-1\right)(1-\theta).
\end{equation}
\end{thmb}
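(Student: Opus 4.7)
The plan is to establish matching exponential upper and lower bounds on $P(Z_t(B_t)=0)$ at rate $2\beta(\sqrt{2}-1)(1-\theta)$. For the lower bound I use a ``suppress-and-displace'' strategy. Fix $s\in(0,t)$ and $b>0$ to be optimized, and consider the event $A_{s,b}$ on which (i) the initial particle does not branch during $[0,s]$, contributing a factor $e^{-\beta s}$; (ii) its Brownian motion at time $s$ lies in a unit ball around $-bs\mathbf{e}$, contributing a factor of order $s^{-d/2}e^{-b^2s/2}$ by the Gaussian density; and (iii) the BBM subtree started at that displaced position at time $s$ has its support at time $t$ contained in $B(-bs\mathbf{e},(\sqrt{2\beta}+\delta)(t-s))$, which holds with probability $1-o(1)$ by Theorem~A applied to the subtree. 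By the triangle inequality, on $A_{s,b}$ we have $Z_t(B_t)=0$ provided $bs+\sqrt{2\beta}s>\sqrt{2\beta}(1-\theta)t+r+\delta t$, where $r$ is the radius of $B$. Saturating this constraint and minimizing $s(\beta+b^2/2)$ over $b>0$ yields $b^*=\sqrt{\beta}(2-\sqrt{2})$, $s^*=(1-\theta)t/\sqrt{2}$, and minimal cost $2\beta(\sqrt{2}-1)(1-\theta)t$, matching the claimed rate.

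For the upper bound I apply the branching Markov property at the optimal scale $s^*$:
\begin{equation*}
P(Z_t(B_t)=0)=E\Bigl[\prod_{i=1}^{N_{s^*}}q_{t-s^*}\bigl(X_i^{(s^*)}\bigr)\Bigr],\qquad q_u(x):=P_x\bigl(Z_u(B_t)=0\bigr).
\end{equation*}
Using $\log(1-y)\le -y$, this is at most $E\bigl[\exp\bigl(-\sum_i(1-q_{t-s^*}(X_i^{(s^*)}))\bigr)\bigr]$. I then choose a neighborhood $G$ on the geodesic from $0$ to the center of $B_t$ such that: (a) $1-q_u(x)\ge\eta>0$ uniformly for $x\in G$, which follows from a first-moment (many-to-one) lower bound on the expected mass a fresh BBM started at $x$ deposits in $B_t$ by time $u$; and (b) the count $M:=\#\{i:X_i^{(s^*)}\in G\}$ satisfies $P(M<K)\le e^{-(2\beta(\sqrt{2}-1)(1-\theta)+o(1))t}$ for a suitable constant $K$. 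Splitting the expectation according to $\{M<K\}$ versus $\{M\ge K\}$ then yields the matching upper bound.

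The hard part is step (b): producing a lower large-deviation estimate for the mass of BBM in a subcritical ball at time $s^*$ that recovers precisely the rate $2\beta(\sqrt{2}-1)(1-\theta)$. Executing this requires a spine change of measure built from the tilted additive martingale $\exp(v\cdot X-(\beta+v^2/2)s)$, with the tilt $v$ aligned with $\mathbf{e}$ and the geometry of $G$ calibrated so that the Legendre transform of the many-to-one exponent agrees with the lower-bound rate. The calibration must be carried out uniformly over $0\le\theta<1$, and in particular care is needed near the boundary $\theta\uparrow 1$, where $s^*\downarrow 0$ and the Gaussian and branching contributions become delicately balanced.
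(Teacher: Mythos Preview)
Your lower bound is correct and essentially the same as the paper's (and \cite{O2018}'s): suppress branching on $[0,s]$, displace the single particle in the direction $-\mathbf{e}$, then let the subtree evolve freely and use the speed of BBM to ensure it misses $B_t$. The paper parametrizes by $\rho=s/t$ alone and appeals to Corollary~\ref{corollary1} rather than Theorem~A, but for $a=0$ your use of Theorem~A is sufficient, and your optimization correctly recovers $\hat\rho=(1-\theta)/\sqrt{2}$ and the rate $2\beta(\sqrt{2}-1)(1-\theta)$.

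The upper bound, however, has a genuine gap: it is circular. Your step~(b) requires $P(M<K)\le e^{-(2\beta(\sqrt{2}-1)(1-\theta)+o(1))t}$ where $M$ is the mass of BBM in a ball $G$ at linear distance from the origin at time $s^*$. But this is precisely a lower-tail large-deviation estimate for local mass in a linearly moving ball---exactly the content of Theorem~B (with $G$ in place of $B$ and $s^*$ in place of $t$), at the \emph{sharp} rate. You acknowledge this is ``the hard part'' and gesture at a spine change of measure, but nothing is executed, and calibrating the tilt and the location of $G$ to recover the exact constant is itself the entire problem. There is also a smaller gap in step~(a): a many-to-one lower bound on $E_x[Z_u(B_t)]$ does not by itself give a uniform lower bound on $1-q_u(x)=P_x(Z_u(B_t)>0)$; you would need a second-moment or Paley--Zygmund argument on top of it.

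The paper (and \cite{O2018}) sidesteps the circularity with a two-step bootstrap. \emph{Step one} (Lemma~\ref{lemma2}) establishes a crude bound $P(Z_t(B_t)=0)\le e^{-ct}$ for \emph{some} $c>0$ by elementary means: condition on having near-typical mass in the unit-scale ball $B(x_t\mathbf{e},r_0)$ at time $t-1$, then run independent Brownian motions for one unit of time and use a binomial estimate. \emph{Step two} sharpens the constant, but not by applying Markov at a fixed time $s^*$. Instead one conditions on the random time $\rho_t t$ at which the total population $N_s$ first exceeds $\lfloor t\rfloor$, and discretizes the value of $\rho_t$ over $[0,\bar\rho]$. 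Before $\rho_t t$ there are at most $\lfloor t\rfloor+1$ particles, so a union bound with Proposition~B controls the probability that any of them has escaped a ball of suitable radius (this supplies the ``displacement'' cost). After $\rho_t t$ there are $\lfloor t\rfloor+1$ particles confined to that ball, and the crude Lemma~\ref{lemma2} applied to each subtree makes the probability that all of them miss $B_t$ super-exponentially small. Optimizing over the discretization recovers $I(\theta,0,0)=2(\sqrt{2}-1)(1-\theta)$. The crucial point is that only the \emph{crude} bound feeds into the second step, so there is no circularity; your scheme, by contrast, demands the sharp rate as input to itself.
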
 
Theorem B gives the asymptotic behavior of the probability of absence of $Z$ in linearly moving balls of fixed size. For fine results on the distribution of particles of $Z$ in $\mathbb{R}^d$, we first extend Theorem B to linearly moving balls of time-dependent (suitably decreasing) radius (see Theorem~\ref{theorem1}). Then, via a covering by sufficiently many of such smaller balls, we obtain a large-deviation result on the degree of density of the support of BBM in subcritical balls (see Theorem~\ref{theorem2}). Henceforth, by an abuse of terminology, we will refer to the density of the support of BBM as the \emph{density of BBM}.

\subsection{History and related problems}

At the root of the present work is the strong law of large numbers (SLLN) for the local mass of BBM \cite[Corollary, p.\ 222]{W1967}, where Watanabe established an almost sure result on the asymptotic behavior of certain branching Markov processes, which covers the SLLN for local mass of BBM in fixed Borel sets in $\mathbb{R}^d$ as a special case. This was extended by Biggins \cite[Corollary 4]{B1992} to linearly moving Borel sets. The result of Biggins was originally cast in the setting of a branching random walk in discrete time, and extended in the same paper to the continuous setting of a BBM.

We now review various large-deviation (LD) results concerning the mass of BBM. First, we consider probabilities of absence or presence. Let $X_{\text{max}}(t)$ denote the position of the rightmost particle at time $t$ of a BBM in $\mathbb{R}$, and for any $d\geq 1$ let 
$$M_t:=\inf\{r>0:\text{supp}(Z(t))\subseteq B(0,r)\}$$
as in Theorem A. Set $v:=\sqrt{2\beta}$. Recall that by Theorem A, for large $t$, typically there are particles outside $B(0,rt)$ when $r<v$, but no particles outside $B(0,rt)$ when $r>v$. In \cite{CR1988}, the large-time asymptotics of LD probabilities $P(X_{\text{max}}(t)\geq rt)$ for $r>v$ were found in $d=1$, where $P(X_{\text{max}}(t)\geq rt)$ is a probability of presence in a region where there would typically be no particles. In \cite{E2004}, the asymptotics of LD probabilities $P(M_t\leq rt)$ for $0<r<v$ were found in any dimension, and note that in this case $P(M_t\leq rt)$ is a probability of absence in the region $\mathbb{R}^d\setminus B(0,rt)$ where there would typically be particles. Recently in \cite{D2017}, the asymptotics of $P(X_{\text{max}}(t)\leq rt)$ for $r<v$ were found when $d=1$, where $r$ was allowed to be negative as well. More generally, concerning the mass of BBM in time-dependent domains, fewer results are available. In \cite{A2017}, the upper tail asymptotics for the mass inside $[rt,\infty)$, $r<v$ were found for a BBM in $\mathbb{R}$. Due to \cite[Corollary 4]{B1992}, the mass inside $[rt,\infty)$ at time $t$ is typically $\exp[\beta(1-\theta^2)+o(t)]$, and in \cite{A2017}, LD probabilities $P(Z_t([rt,\infty))\geq e^{\beta at})$ were studied for $1-\theta^2<a<1$. 

The current work can be regarded as a follow-up to \cite{O2018}. Let $\textbf{e}$ be any unit vector in $\mathbb{R}^d$ and $r>0$ be fixed, and for $t>0$ define
$$B_t:=B(\theta vt \textbf{e},r),\quad \mathcal{B}_t:=B(0,\theta vt).$$ 
For $0<\theta<1$, the mass inside $B_t$ and the mass outside $\mathcal{B}_t$ both typically grow as $\exp[\beta(1-\theta^2)t+o(t)]$ for large time. In \cite[Thm.\ 1]{O2018} and \cite[Thm.\ 2]{O2018}, respectively, the asymptotic behavior of LD probabilities in the downward direction, $P(Z_t(B_t)<e^{\beta at})$ and $P(Z_t(\mathbb{R}^d\setminus \mathcal{B}_t)<e^{\beta at})$, were studied for $0\leq a<1-\theta^2$, where $a$ is an atypically small exponent for the growth of mass in the respective time-dependent domains. Note that Theorem B is a special case of \cite[Thm.\ 1]{O2018} where $a=0$. 
 
As for the density of BBM, in \cite{GK2003}, Grigor'yan and Kelbert established sufficient conditions for the transience and recurrence of a general class of BBMs with time-dependent branching rates and mechanisms on Riemannian manifolds, where the term \emph{recurrence} therein is equivalent to the almost sure density of the full range of BBM in the manifold.

\smallskip

We conclude this section with some often used terminology and the outline of the paper.
\begin{definition}[SES]
A generic function $g: \mathbb R_+\to \mathbb R$ is called \emph{super-exponentially small (SES)} if $\lim_{t\to\infty}\log g(t)/t=-\infty$.
\end{definition}

\begin{definition}[Overwhelming probability]
Let $(A_t)_{t>0}$ be a family of events indexed by time $t$. We say that $A_t$ occurs \emph{with overwhelming probability} as $t\rightarrow\infty$ if there is a constant $c>0$ and time $t_0$ such that 
$$P(A_t^c)\leq e^{-ct} \quad \text{for all} \quad t\geq t_0, $$ 
where $A^c$ denotes the complement of event $A$.
\end{definition}

{ \bf Outline:} The rest of the paper is organized as follows. In Section~\ref{section2}, we present our main results. In Section~\ref{section3}, we develop the preparation needed, including the statement and proof of several introductory results, for the proofs of Theorem~\ref{theorem1} and Theorem~\ref{theorem2}. Section~\ref{section4} is on the large deviations of the mass of BBM in moving and shrinking balls, including the proof of Theorem~\ref{theorem1}. Section~\ref{section5} is on the density of BBM in subcritical balls, including the proof of Theorem~\ref{theorem2}. In Section~\ref{section6}, we prove almost sure results on the large-time behavior of $r(t)$-enlargement of the support of BBM when the shrinking radius $r(t)$ is exponentially small in $t$.

\section{Results}\label{section2}

Our first result is a large deviation result, giving the large-time asymptotic rate of decay for the probability that the mass of BBM inside a linearly moving and exponentially shrinking ball is atypically small on an exponential scale. It is an extension of \cite[Thm.\ 1]{O2018}, where linearly moving balls of fixed size were considered. Here, the radius of the moving ball is time-dependent as well.

\begin{theorem}[Lower tail asymptotics for mass inside a moving and shrinking ball] \label{theorem1}
Let $0\leq\theta<1$, $0\leq k<(1-\theta^2)/d$, $r_0>0$ and $\textbf{e}$ be any unit vector in $\mathbb{R}^d$. Let $x:\mathbb{R}_+\rightarrow\mathbb{R}_+$ and $r:\mathbb{R}_+\rightarrow\mathbb{R}_+$ be defined by $x(t)=\theta\sqrt{2\beta}t$ and $r(t)=r_0\,e^{-\beta kt}$. For $t\geq 0$ define $B_t=B(x(t)\textbf{e},r(t))$. Then, for $0\leq a<1-\theta^2-kd$,
\begin{equation} \underset{t\rightarrow\infty}{\lim}\frac{1}{t}\log P\left(Z_t(B_t)<e^{\beta a t}\right)=-\beta \times I(\theta,k,a), \label{ld0}
\end{equation}
where
\begin{equation}
I(\theta,k,a)=\underset{\rho\in(0,\bar\rho]}{\inf}\left[\rho+\frac{\left(\sqrt{(1-\rho)^2-(a+kd)(1-\rho)}-\theta\right)^2}{\rho}\right], \label{ld}
\end{equation}
and
\begin{equation}\bar\rho=\bar\rho(\theta,k,a)=1-\frac{a+kd}{2}-\sqrt{\left(\frac{a+kd}{2}\right)^2+\theta^2}. \label{rhobar}
\end{equation}
\end{theorem}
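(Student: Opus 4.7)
The plan is to establish matching upper and lower bounds for $\tfrac{1}{t}\log P(Z_t(B_t)<e^{\beta at})$ by following and extending the strategy of \cite[Thm.\ 1]{O2018}, which treats the fixed-radius case $k=0$. A useful observation is that, setting $b:=a+kd$, the rate function $I(\theta,k,a)$ in (\ref{ld}) coincides formally with $I(\theta,0,b)$; this reflects the heuristic that shrinking the ball volume by $r(t)^d=r_0^d e^{-\beta kd t}$ is equivalent to raising the effective mass threshold by the same factor. Indeed, a first moment calculation shows $E[Z_t(B_t)]$ grows like $e^{\beta(1-\theta^2-kd)t+o(t)}$, so the range $0\le a<1-\theta^2-kd$ is exactly the lower-tail large deviation regime.

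For the lower bound, I would use an explicit ``cheap strategy'' parameterized by $\rho\in(0,\bar\rho]$: (i) require no branching in $[0,\rho t]$ (probability $e^{-\beta\rho t}$); (ii) force the unique ancestor to travel by Brownian motion to a position $y^\ast\mathbf{e}$ at time $\rho t$, with $y^\ast/t=\sqrt{2\beta}\bigl(\theta-\sqrt{(1-\rho)^2-b(1-\rho)}\bigr)$, which by a standard Gaussian small-ball estimate has probability $\exp[-\beta(\theta-\sqrt{(1-\rho)^2-b(1-\rho)})^2 t/\rho+o(t)]$; (iii) a first-moment computation (with the $r(t)^d$ volume factor) shows that the sub-BBM started at $y^\ast\mathbf{e}$ satisfies $E[Z_t(B_t)\mid\text{steps (i)--(ii)}]\approx e^{\beta at}$, so picking $y^\ast$ slightly sub-optimal and applying Markov's inequality produces a positive lower bound on $P(Z_t(B_t)<e^{\beta at}\mid\text{steps (i)--(ii)})$. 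Multiplying the costs and optimizing over $\rho\in(0,\bar\rho]$ yields the matching lower bound.

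For the upper bound, I would follow the decomposition in \cite[Thm.\ 1]{O2018}: discretize $[0,\bar\rho]$ into a fine mesh and, for each $\rho_i$ in the mesh, estimate the probability of $\{Z_t(B_t)<e^{\beta at}\}$ via the ancestral configuration at time $\rho_i t$. Two sources of cost are balanced: either $Z_{\rho_i t}(\mathbb{R}^d)$ is atypically small, or most ancestors at time $\rho_i t$ lie in positions from which the sub-BBMs cannot deliver mass $e^{\beta at}$ to $B_t$. A many-to-one computation, together with sharp first- and second-moment estimates for sub-BBM mass in $B_t$, bounds each such probability by $\exp[-\beta t(\rho_i+(\theta-\sqrt{(1-\rho_i)^2-b(1-\rho_i)})^2/\rho_i)+o(t)]$; a union bound over the mesh, followed by refining the mesh, recovers $I(\theta,k,a)$.

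The principal obstacle is the moment analysis in the shrinking-radius regime. Unlike in \cite{O2018}, the target ball radius tends to $0$ exponentially fast; while the first moment absorbs the $r(t)^d$ factor cleanly, a matching second moment bound on the sub-BBM mass requires care to avoid blow-up from close-by pairs of relatives as the ball shrinks. Verifying that the volume factor enters as a clean shift $a\mapsto a+kd$ in the rate function, so that the optimization mirrors the fixed-radius case and the formula for $\bar\rho$ in (\ref{rhobar}) arises from the endpoint condition $\sqrt{(1-\rho)^2-b(1-\rho)}=\theta$, is the central technical content of the extension.
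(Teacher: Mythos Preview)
Your overall plan matches the paper's: the lower bound via the explicit ``suppress branching on $[0,\rho t]$, then displace'' strategy, the upper bound by discretizing the suppression time following \cite{O2018}, and---crucially---the observation that the shrinking radius enters only through the shift $a\mapsto b:=a+kd$, so that the entire optimization (including the endpoint $\bar\rho$) is formally identical to the fixed-radius case. The paper's upper bound literally consists of the instruction ``replace $a$ by $a+kd$ in the relevant equations of \cite{O2018}''.

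The one substantive divergence is your step (iii) and the ``principal obstacle'' you flag. The paper does \emph{not} use second-moment estimates on the sub-BBM mass in the shrinking ball; it instead runs a two-step bootstrap. First, Lemma~\ref{lemma2} gives a coarse bound $P(Z_t(B_t)<e^{\beta at})\le e^{-ct}$ by a purely first-moment/binomial argument: at time $t-1$ there are, with overwhelming probability, $\exp[\beta(1-\theta^2-\delta)t]$ particles in the \emph{fixed-radius} ball $B(x_t\mathbf e,r_0)$ (this is the $k=0$ result from \cite{O2018}), and each independently lands in $B_t$ over $[t-1,t]$ with probability at least $c_2 e^{-\beta kd t}$. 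Second, this coarse bound is fed back into the discretization: once there are $\lfloor t\rfloor+1$ ancestors at time $\rho_t t$, each sub-BBM fails to deliver mass $e^{\beta at}$ with probability at most $e^{-ct}$, and the product is super-exponentially small. No pair correlations or second moments enter, so the blow-up you worry about never arises. For the lower bound step (iii), your Markov-inequality argument is fine (and arguably cleaner); the paper instead invokes Corollary~\ref{corollary1}---the almost-sure growth rate, itself derived from Lemma~\ref{lemma2}---to get $P(A_t\mid E_t)=e^{o(t)}$.
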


\begin{remark} In terms of the BBM's optimal strategies for realizing the LD event $\left\{Z_t(B_t)< e^{\beta a t}\right\}$, this means (see the proof of Theorem~\ref{theorem1} for details) to realize $\left\{Z_t(B)< e^{\beta a t}\right\}$: \newline
the system suppresses the branching completely, and sends the single particle to a distance of $\sqrt{2\beta}(\sqrt{(1-\hat{\rho})^2-(a+kd)(1-\hat{\rho})}-\theta)t+o(t)$ in the opposite direction of the center of $B_t$ over $[0,\hat{\rho}t]$, and then behaves `normally' in the remaining interval $[\hat{\rho}t,t]$,  
where $\hat\rho$ denotes the unique minimizer of the optimization problem in \eqref{ld}. 	
\end{remark}

\begin{remark}Theorem~\ref{theorem1} implies in particular that as the dimension $d$ increases it becomes easier on a logarithmic scale to send exponentially few particles to $B_t$ at time $t$.
\end{remark}

The optimization problem in \eqref{ld} is identical to the one in \cite[Eq.\ 4]{O2018} with the replacement of the parameter $a$ therein by $a+kd$. The following can be shown to hold: 
\begin{enumerate}
	\item[(i)] The function to be minimized in \eqref{ld}, call $f$, is strictly convex, and has a unique minimizer on $(0,1-a-kd)$. Denote this minimizer by $\hat\rho=\hat\rho(\theta,k,a)$. Then, $\hat\rho$ satisfies $\hat\rho\leq\bar\rho$.
	\item[(ii)] If we consider $f$ as $f_{\theta,k,a}$, and keep any two of the three parameters $\theta,k,a$ fixed, both $\hat\rho$ and $f(\hat\rho)$ are strictly decreasing in the remaining parameter over the allowed set of values for that parameter. This is intuitively obvious since it becomes easier to send less than $e^{\beta at}$ particles to $B_t$, i.e., the event $\{Z_t(B_t)<e^{\beta a t}\}$ becomes more likely, as either of $\theta,k,a$ increases.
\end{enumerate}
For the proofs of these results and more details on the optimization problem in \eqref{ld}, we refer the reader to \cite[Sect.\ 5]{O2018}.

Theorem~\ref{theorem1} leads to the following almost sure result concerning the mass of BBM inside moving and shrinking balls.

\begin{corollary}[Almost sure growth inside a moving and shrinking ball] \label{corollary1}
Let $0\leq\theta<1$, $0\leq k<(1-\theta^2)/d$, $r_0>0$ and $\textbf{e}$ be any unit vector in $\mathbb{R}^d$. Let $x:\mathbb{R}_+\rightarrow\mathbb{R}_+$ and $r:\mathbb{R}_+\rightarrow\mathbb{R}_+$ be defined by $x(t)=\theta\sqrt{2\beta}t$ and $r(t)=r_0\,e^{-\beta kt}$. For $t\geq 0$ define $B_t=B(x(t)\textbf{e},r(t))$. Then,
\begin{equation}\underset{t\rightarrow\infty}{\lim}\frac{1}{t}\log Z_t(B_t)=\beta(1-\theta^2-kd) \quad \text{a.s.} \label{asresult1}
\end{equation}
\end{corollary}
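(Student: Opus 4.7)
The plan is to establish matching almost-sure upper and lower bounds on $\tfrac{1}{t}\log Z_t(B_t)$, whose common value $\beta(1-\theta^2-kd)$ equals the exponential growth rate of the first moment $E[Z_t(B_t)]$. The two main inputs are Theorem~\ref{theorem1} for the lower bound and the many-to-one formula for the upper bound, each combined with a Borel--Cantelli argument along a discrete sequence of times and then extended to all $t$.

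For the \textbf{lower bound}, fix $\varepsilon>0$ and set $a:=1-\theta^2-kd-\varepsilon$; since $a<1-\theta^2-kd$, Theorem~\ref{theorem1} applies and gives $P(Z_t(B_t)<e^{\beta a t})\leq e^{-c(\varepsilon)t}$ for some $c(\varepsilon)>0$ and all sufficiently large $t$. This is summable along integer times, so Borel--Cantelli yields $\liminf_{n\to\infty,\,n\in\mathbb{N}}\tfrac{1}{n}\log Z_n(B_n)\geq\beta(1-\theta^2-kd-\varepsilon)$ a.s., and letting $\varepsilon\downarrow 0$ along a countable sequence gives the lower bound along integers. For the \textbf{upper bound}, the many-to-one lemma gives $E[Z_t(B_t)] = e^{\beta t}\int_{B_t}(2\pi t)^{-d/2}e^{-|y|^2/(2t)}\,dy$. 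For $y\in B_t$, $|y|^2 = 2\beta\theta^2 t^2 + O(te^{-\beta k t})$, so the Gaussian factor contributes $e^{-\beta\theta^2 t+o(1)}$, and $\mathrm{vol}(B_t) = V_d r_0^d e^{-\beta k d t}$; consequently $E[Z_t(B_t)] = C\,t^{-d/2}e^{\beta(1-\theta^2-kd)t}(1+o(1))$. Markov's inequality combined with Borel--Cantelli along integers then yields $\limsup_{n\to\infty,\,n\in\mathbb{N}}\tfrac{1}{n}\log Z_n(B_n)\leq\beta(1-\theta^2-kd)$ a.s.

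To promote both bounds to all $t\to\infty$, I would repeat the two Borel--Cantelli arguments along the finer grid $t_m=m\delta$ for arbitrary $\delta>0$ (the exponential estimates and their summability are unaffected), then bridge between consecutive grid points $t\in[t_m,t_{m+1}]$ by controlling the maximum spatial excursion of any BBM descendant over a window of length $\delta$, via a union bound over the at most $e^{\beta(1+o(1))t_{m+1}}$ particles alive at $t_{m+1}$ combined with sub-Gaussian Brownian tail estimates. This lets one sandwich $Z_t(B_t)$ between $Z_{t_m}(B_{t_m}^{\pm})$ and $Z_{t_{m+1}}(B_{t_{m+1}}^{\pm})$ for slightly perturbed balls of the same asymptotic form, and then $\delta\downarrow 0$ along a countable sequence concludes.

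The \textbf{main obstacle} is precisely this continuous-time extension: a crude set-inclusion enlargement of $B_t$ over $[t_m,t_{m+1}]$ produces a set of fixed diameter $\approx\theta\sqrt{2\beta}\delta$, which for large $t$ is far larger than the shrinking radius $r(t)=r_0 e^{-\beta k t}$ and thus yields only the weaker upper exponent $\beta(1-\theta^2)$ instead of the sharp $\beta(1-\theta^2-kd)$. The resolution is to track individual trajectories rather than rely on set inclusion, so that the contribution of the shrinking radius is preserved through the bridging step.
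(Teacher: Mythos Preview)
Your approach is essentially the same as the paper's: bound the upper tail by Markov's inequality and the many-to-one formula, bound the lower tail by an exponential-decay estimate, and close with Borel--Cantelli. One point of difference worth flagging: you invoke Theorem~\ref{theorem1} for the lower bound, but the paper deliberately uses only Lemma~\ref{lemma2} here, because Corollary~\ref{corollary1} is itself an ingredient in the proof of Theorem~\ref{theorem1} (specifically in establishing $P(A_t\mid E_t)=e^{o(t)}$ in the lower-bound argument). So your version, while mathematically valid since Theorem~\ref{theorem1} is a true statement, would be circular within the paper's logical structure. Lemma~\ref{lemma2} already delivers the exponential decay $P(Z_t(B_t)<e^{\beta a t})\le e^{-ct}$ you need, without the sharp constant---and the sharp constant is irrelevant for Borel--Cantelli.

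On the continuous-time extension: you are more scrupulous than the paper, which runs Borel--Cantelli along integers and then simply declares $P(\Omega_0)=1$ without bridging between integer times. Your identification of the obstacle (the naive set-inclusion enlargement destroys the $-kd$ term) and the proposed fix (track particle trajectories over short windows) are sound, though the paper does not carry this out.
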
 

\begin{remark}Corollary~\ref{corollary1} can be viewed as an extension of \cite[Corollary 4]{B1992} to linearly moving balls of time-dependent radius. The exponential growth rate of $Z_t(B_t)$ consists of three pieces: the first term on the right-hand side of \eqref{asresult1} contributes positively and is simply the growth rate of the global mass of BBM, the second and third terms contribute negatively to the exponent, and come from a `one-particle picture,' where a Brownian particle has linear displacement and falls inside a specified ball of exponentially decaying radius.
\end{remark}

Next, we present the main result of this work, which is on the density of BBM in subcritical balls. First, we recall the following standard definition.

\begin{definition} A set $S$ is said to be \emph{$\delta$-dense} in $X\subseteq\mathbb{R}^d$ for a given $\delta>0$ if for any $x$ in $X$, there exists $s$ in $S$ such that $|s-x|<\delta$.
\end{definition}

\begin{theorem}[LD on density of BBM] \label{theorem2}
Let $0<\theta<1$, $0\leq k<(1-\theta^2)/d$, and for $t>0$ define $\rho_t:=\theta\sqrt{2\beta}t$. For $t>0$ and a function $r:\mathbb{R}_+\to\mathbb{R}_+$, define the event $A_t^r$ as 
$$A_t^r:=\left\{\text{supp}(Z(t))\:\: \text{is not $r(t)$-dense in $B(0,\rho_t)$}\right\}.$$
If $r$ is defined by $r(t)=r_0\,e^{-\beta kt}$, where $r_0>0$, then
\begin{equation} \underset{t\rightarrow\infty}{\lim}\frac{1}{t}\log P\left(A_t^r\right)=-\beta \times I(\theta,k,0). \label{thm2}
\end{equation}
\end{theorem}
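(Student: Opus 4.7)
The plan is to prove matching bounds $\lim_{t\to\infty} t^{-1}\log P(A_t^r) = -\beta I(\theta, k, 0)$, which is exactly the LD rate that Theorem~\ref{theorem1} (with $a=0$) attaches to emptiness of a single ball of radius $r(t)$ placed essentially at the boundary of $B(0,\rho_t)$. For the lower bound: for small $\epsilon > 0$ and a fixed unit vector $\mathbf{e}$, the interior test ball $B_t^\star := B((\theta - \epsilon)\sqrt{2\beta}\,t\,\mathbf{e},\, r(t))$ lies in $B(0,\rho_t)$ for all large $t$ (since $r(t)\to 0$), and $\{Z_t(B_t^\star) = 0\} \subseteq A_t^r$. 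Theorem~\ref{theorem1} applied with centre parameter $\theta - \epsilon$ and $a = 0$, followed by $\epsilon \downarrow 0$ and continuity of $I$ in its first argument, gives $\liminf_{t\to\infty} t^{-1}\log P(A_t^r) \geq -\beta I(\theta, k, 0)$.

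The upper bound is more delicate. The naive $r(t)/2$-covering of $B(0,\rho_t)$ uses $\asymp t^d e^{\beta k d t}$ balls, so a direct union bound through Theorem~\ref{theorem1} would yield only the weaker rate $\beta k d - \beta I(\theta, k, 0)$. To eliminate this $\beta kd$ loss I would instead use a two-scale covering. First, cover $B(0,\rho_t)$ by polynomially many \emph{fixed}-radius balls $\{B(x_j, r_0)\}_{j=1}^{N_t}$ with $N_t \asymp t^d$, so that every $A_t^r$-hole lies inside some $B(x_j, r_0)$. Then, for a fixed small $\epsilon > 0$ and each $j$, decompose the contribution according to whether $Z_t(B(x_j, r_0)) < e^{\beta(kd + \epsilon) t}$ or $\geq e^{\beta(kd + \epsilon) t}$. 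On the low-mass side, Theorem~\ref{theorem1} applied to the fixed-radius ball $B(x_j, r_0)$ (\textit{i.e.} with $k=0$ in Theorem~\ref{theorem1}) and exponent $a = kd + \epsilon$ bounds the probability by $\exp(-\beta I(\theta_j, 0, kd + \epsilon)\,t + o(t))$, where $\theta_j := |x_j|/(\sqrt{2\beta}t)$. The formula~\eqref{ld} depends on $k$ and $a$ only through the combination $a + kd$, so $I(\theta_j, 0, kd + \epsilon) = I(\theta_j, k + \epsilon/d, 0)$; combined with monotonicity of $I$ in its first argument ($\theta_j \leq \theta$) and the polynomial size of the net, summation over $j$ yields $\exp(-\beta I(\theta, k + \epsilon/d, 0)\,t + o(t))$, and sending $\epsilon \downarrow 0$ using continuity of $I$ delivers the target rate $-\beta I(\theta, k, 0)$.

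The main technical obstacle will be the high-mass complement: the event that $B(x_j, r_0)$ contains at least $e^{\beta(kd + \epsilon) t}$ particles yet has some $r(t)$-empty subball must be shown super-exponentially small in $t$, uniformly in $j$, so that it contributes nothing to the LD rate after summation. Heuristically the mean occupancy of any $r(t)$-subball is then $\asymp e^{\beta \epsilon t} \to \infty$, so a Poissonian calculation gives doubly-exponential decay of emptiness; but BBM particles are clustered by genealogy rather than i.i.d., so this is where real work is needed. The natural route is to fix a backward time gap $s = s(t) \to \infty$ with $s/t \to 0$, decompose the particles in $B(x_j, r_0)$ at time $t$ into descendant sub-trees of the many essentially independent ancestors alive at time $t - s$, couple the sub-trees with independent Brownian motions started from the corresponding ancestor positions, and close the argument with a Chernoff-type covering estimate.
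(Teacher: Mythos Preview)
Your lower bound is exactly the paper's. For the upper bound the two arguments diverge, although both begin with the same polynomial covering of $B(0,\rho_t)$ by fixed-radius balls $B(x_j,r_0)$.

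Inside each $B(x_j,r_0)$, the paper does \emph{not} condition on the mass at time $t$. Instead it conditions on the stopping-time variable $\rho_t t:=\inf\{s:N_s>\lfloor t\rfloor\}$, discretised over $[0,\bar\rho t]$. On the event $\{\rho_t\in[i/n,(i+1)/n)\}$ one pays the suppression cost $e^{-\beta(i/n)t+o(t)}$ (Proposition~A); the $\lfloor t\rfloor+1$ particles alive at $\rho_t t$ are, with probability $1-e^{-(\cdots)t}$ from Proposition~B, all confined to a ball of the ``right'' radius~\eqref{eqradius}; and then Lemma~\ref{lemma2} shows each of the $\lfloor t\rfloor+1$ independent sub-BBMs hits the farthest test subball with probability $\ge 1-e^{-ct}$, so the probability that \emph{all} miss is $\le e^{-ct^2}$. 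This $e^{-ct^2}$ swallows the $n_t\asymp e^{\beta kdt}$ exponential number of test subballs without loss, and optimising over $i/n$ reproduces the variational formula for $I(\theta,k,0)$ directly. In particular the paper never invokes Theorem~\ref{theorem1} in the upper bound; it rebuilds the rate from first principles and obtains the SES covering estimate as a by-product.

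Your route is more modular: the rate is inherited from Theorem~\ref{theorem1} on the low-mass event, and the SES high-mass event is isolated as a separate step. This is perfectly sound in spirit, but two points are worth flagging. First, the high-mass step is where essentially all the work lies, and your sketch is more elaborate than necessary: a fixed time gap $s=1$ suffices (exactly as in the proof of Lemma~\ref{lemma2}), since a single Brownian step lands in any prescribed $r(t)$-subball with probability $\gtrsim e^{-\beta kdt}$, and $e^{\beta(kd+\epsilon)t}$ independent ancestors already give doubly exponential decay that survives the union over $e^{\beta kdt}$ subballs; there is no need for $s(t)\to\infty$. Second, to sum Theorem~\ref{theorem1} over the polynomially many centres $x_j$ you need the upper bound uniformly in $\theta_j=|x_j|/(\sqrt{2\beta}t)\in[0,\theta]$; the paper achieves this uniformity via the probability-level monotonicity of Lemma~\ref{lemma1} (for emptiness), and you would need the analogous monotonicity for lower tails $\{Z_t(B(x_j,r_0))<e^{\beta a t}\}$, which is plausible but is not the statement of Lemma~\ref{lemma1} as written.
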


Note that the rate constant in \eqref{thm2} is a measure of how fast the support of BBM becomes $r(t)$-dense in the linearly expanding subcritical ball $B=(B(0,\rho_t))_{t\geq 0}$.

Via a Borel-Cantelli argument, Theorem~\ref{theorem2} leads to the following corollary, which is on the density of the full range of BBM. We provide a proof for completeness.

\begin{corollary}[Density of BBM]\label{corollary2}
Let $Z$ be a strictly dyadic BBM with constant branching rate $\beta>0$, and let $R$ denote the full range of $Z$ as defined in \eqref{range}. Then, in any dimension $d\geq 1$, $R$ is dense in $\mathbb{R}^d$ almost surely.
\end{corollary}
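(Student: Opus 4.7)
The plan is a standard Borel--Cantelli argument with Theorem~\ref{theorem2} as the sole analytic input. Fix any $\theta \in (0,1)$ and any $k \in (0, (1-\theta^2)/d)$, and take $r_0 = 1$, so that $r(t) = e^{-\beta k t}$ and $\rho_t = \theta\sqrt{2\beta}\, t$ in the notation of Theorem~\ref{theorem2}. The first point to verify is that the rate constant $I(\theta, k, 0)$ is strictly positive; this is transparent from \eqref{ld}, since the integrand $\rho + (\sqrt{(1-\rho)^2 - kd(1-\rho)} - \theta)^2/\rho$ is already at least $\rho > 0$ on $(0, \bar\rho]$. Choosing any $c$ with $0 < c < \beta\, I(\theta, k, 0)$, Theorem~\ref{theorem2} yields an integer $n_0$ such that $P(A_n^r) \leq e^{-cn}$ for every integer $n \geq n_0$. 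These probabilities are summable, so by the Borel--Cantelli lemma there is an almost sure event $\Omega_0$ on which $\text{supp}(Z(n))$ is $r(n)$-dense in $B(0,\rho_n)$ for all sufficiently large integers $n$.

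I then argue pointwise on $\Omega_0$. Given any $y \in \mathbb{R}^d$ and any $\delta > 0$, the facts $\rho_n \to \infty$ and $r(n) \to 0$ produce an index $n_1$ such that $y \in B(0, \rho_n)$ and $r(n) < \delta$ for every $n \geq n_1$. Choosing $n \geq n_1$ large enough that $\text{supp}(Z(n))$ is $r(n)$-dense in $B(0,\rho_n)$, the definition of denseness furnishes a particle location $s \in \text{supp}(Z(n)) \subseteq R$ with $|s - y| < r(n) < \delta$, so $R \cap B(y, \delta) \neq \emptyset$. Since $y \in \mathbb{R}^d$ and $\delta > 0$ were arbitrary, $R$ is dense in $\mathbb{R}^d$ on the full-probability event $\Omega_0$, proving the corollary.

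There is no substantive obstacle once Theorem~\ref{theorem2} is in hand; the only step needing a brief verification is the positivity of $I(\theta, k, 0)$, which secures summability for Borel--Cantelli. Note that the discretization to integer times is costless here, because the conclusion requires only that $R$ meet every Euclidean ball, which follows from $r(n)$-denseness on any single sequence of times $n \to \infty$. In particular, the parameters $\theta$ and $k$ play no role in the final statement and may be fixed once at the outset of the argument.
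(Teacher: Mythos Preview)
Your argument is correct and follows essentially the same Borel--Cantelli route as the paper: apply Theorem~\ref{theorem2} along integer times, sum the tail probabilities, and pass to the pointwise density statement for $R$. The only difference is cosmetic: the paper uses the density scale $1/n$ (noting $1/n\geq e^{-kn}$ eventually) and phrases things via $R(n)$ rather than $\mathrm{supp}(Z(n))$, but these are interchangeable here.

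One small point deserves a sharper justification. Your verification that $I(\theta,k,0)>0$ reads ``the integrand is already at least $\rho>0$ on $(0,\bar\rho]$''; this shows each value is positive but does not by itself rule out $\inf_{\rho}=0$, since $\rho$ can be arbitrarily small. The clean fix is to note that as $\rho\to 0^+$ the second summand $(\sqrt{1-kd}-\theta)^2/\rho$ blows up (because $kd<1-\theta^2$ forces $\sqrt{1-kd}>\theta$), so the continuous positive function on $(0,\bar\rho]$ attains its infimum at some interior point or at $\bar\rho$, where it equals $\bar\rho>0$. Either way the infimum is strictly positive, and Borel--Cantelli applies as you intended.
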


\begin{proof}
For concreteness, set $\theta=1/\sqrt{2}$ in the definition of $\rho_t$ in the statement of Theorem~\ref{theorem2} so that $\rho_t=\sqrt{\beta}t$. For $n\in\mathbb{N}$, let $F_n$ be the event that $R(n)$ is not $(1/n)$-dense in $B(0,\rho_n)$. Note that for any $k$, $1/n \geq e^{-kn}$ for all large $n$, and for any $n$, $\text{supp}(Z(n))\subseteq R(n)$. Therefore, Theorem~\ref{theorem2} implies that there exist $c>0$ and $j\in\mathbb{N}$ such that for $n\geq j$, $P(F_n)\leq e^{-cn}$. Since $\sum_{n=j}^\infty P(F_n)\leq 1/(1-e^{-c})<\infty$, by Borel-Cantelli lemma, with probability one, only finitely many $F_n$ occur. This means that $P(\Omega_0)=1$, where 
$$\Omega_0:=\left\{\omega: \exists \: n_0=n_0(\omega) \: \text{such that} \: \forall \: n\geq n_0, \: R(n)(\omega) \: \text{is $(1/n)$-dense in} \: B(0,\rho_n)\right\}.$$
Let $\omega\in\Omega_0\,$. Then $\exists \: n_0(\omega)$ such that $\forall \: n\geq n_0$, $R(n)(\omega)$ is $(1/n)$-dense in $B(0,\rho_n)$. Let $x\in\mathbb{R}^d$ and $\varepsilon>0$. Consider $B(x,\varepsilon)$. Choose $N$ large enough so that 
\begin{equation} N>n_0,\quad x\in B(0,\rho_N), \quad \frac{1}{N}<\varepsilon. 
\end{equation}
For instance, choosing $N>\max\left\{n_0,\frac{|x|}{\sqrt{\beta}},\frac{1}{\varepsilon}\right\}$ suffices. Then, $B(x,\varepsilon)\cap R(N)\neq \emptyset$, which in view of $R(N)\subseteq R$ implies that $B(x,\varepsilon)\cap R\neq \emptyset$. Therefore, $P(R\:\text{is dense in}\:\mathbb{R}^d)\geq P(\Omega_0)=1$. 
\end{proof}

\begin{remark}
We note that Corollary~\ref{corollary2} is not a new result. Via a similar Borel-Cantelli argument as the one above, one can deduce the almost sure density of the full range of BBM from Watanabe's SLLN \cite[Corollary, p.\ 222]{W1967} for the local mass of BBM. Also, Corollary~\ref{corollary2} can be recovered as a special case of \cite[Thm.\ 8.1]{GK2003}, which provides sufficient conditions for the transience or recurrence of a general class of branching diffusions on Riemannian manifolds, including the BBM in $\mathbb{R}^d$.
\end{remark}

\smallskip

The concept of $r$-density of $Z(t)$ naturally leads to the following definition.
\begin{definition}[Enlargement of BBM]
Let $Z=(Z(t))_{t\geq 0}$ be a BBM. For $t\geq 0$, we define the \emph{$r$-enlargement of BBM at time $t$} corresponding to $Z$ as
$$Z_t^r:=\bigcup_{x\,\in\,\emph{supp}(Z(t))}B(x,r).$$
\end{definition}

For a (typically non-increasing) function $r:\mathbb{R}_+\rightarrow\mathbb{R}_+$, we may similarly define the \emph{$r(t)$-enlargement of BBM} as
$$Z_t^{r_t}:=\bigcup_{x\,\in\,\emph{supp}(Z(t))}B(x,r_t) ,$$
where we have set $r_t=r(t)$ for notational convenience. 

The following results concern the large-time asymptotic behavior of the $r_t$-enlargement of BBM in $\mathbb{R}^d$. As a corollary of Theorem~\ref{theorem2}, we first state that, with probability one, an $r_t$-enlargement of BBM covers the corresponding subcritical ball $B(0,\rho_t)$ eventually for an exponentially decaying $r$ provided that the decay rate satisfies the condition in Theorem~\ref{theorem2}. Then, we give an almost sure result on the large-time asymptotic behavior of the volume of $r_t$-enlargement of BBM. 

Throughout the manuscript, for a Borel set $A\subseteq \mathbb{R}^d$, we say \emph{volume} of $A$ to refer to its Lebesgue measure, which we denote by $\textsf{vol}(A)$, and use $\omega_d$ to denote the volume of the $d$-dimensional unit ball.  

\begin{corollary}[Almost sure density of BBM] \label{corollary3}
Let $0<\theta<1$, $0\leq k<(1-\theta^2)/d$, $r_0>0$ and $r:\mathbb{R}_+\rightarrow\mathbb{R}_+$ be defined by $r(t)=r_0\,e^{-\beta kt}$. For $t>0$ define $\rho_t:=\theta\sqrt{2\beta}t$. Then,
$$P(\Omega_0)=1,\:\:\text{where}\quad\Omega_0:=\left\{\omega: \exists \: t_0=t_0(\omega) \: \text{such that} \:\: \forall \: t\geq t_0, \: B(0,\rho_t)\subseteq Z^{r_t}_t(\omega)\right\}.$$
\end{corollary}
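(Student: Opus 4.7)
The plan is to run a Borel--Cantelli argument on Theorem~\ref{theorem2} in the spirit of the proof of Corollary~\ref{corollary2}. Theorem~\ref{theorem2} provides a constant $c := \beta\,I(\theta,k,0) > 0$ with $P(A_n^r) \leq e^{-cn/2}$ for all sufficiently large integers $n$, so $\sum_n P(A_n^r) < \infty$, and Borel--Cantelli yields that almost surely $A_n^r$ fails for all but finitely many $n$. Equivalently, almost surely there is a random $n_0$ such that $B(0,\rho_n) \subseteq Z_n^{r_n}$ for every integer $n \geq n_0$; this already settles the conclusion along the integer time subsequence.

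The main obstacle is to upgrade this discrete-time statement to the continuous-time conclusion ``$\forall\, t \geq t_0$'' required by $\Omega_0$. The difficulty lies in the tension between the exponentially small density radius $r(t) = r_0\,e^{-\beta k t}$ and the typical $O(1)$ Brownian displacement of any given particle over a unit time interval; in particular, naively tracking the descendants of the time-$n$ particle closest to a target point $y$ does not produce a particle of $Z(t)$ within $r(t)$ of $y$.

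To create slack for the interpolation, I would exploit the strict inequality $k < (1-\theta^2)/d$ by choosing auxiliary parameters $\theta' \in (\theta, \sqrt{1-kd})$ and $k' \in (k, (1-(\theta')^2)/d)$ (both intervals are nonempty by the hypothesis), and applying Theorem~\ref{theorem2} with $(\theta', k')$ in place of $(\theta, k)$. By the same Borel--Cantelli step, almost surely $\text{supp}(Z(n))$ is $r_0\,e^{-\beta k' n}$-dense in $B(0, \theta' \sqrt{2\beta}\,n)$ for every large $n$, and for $t \in [n, n+1]$ one has $B(0, \rho_t) \subseteq B(0, \theta' \sqrt{2\beta}\,n)$, so every $y \in B(0,\rho_t)$ admits a time-$n$ particle at distance at most $r_0\,e^{-\beta k' n}$, much smaller than $r(t)$. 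A first-moment calculation based on the Brownian transition density then shows that the expected number of descendants of $Z(n)$-particles landing in $B(y, r(t))$ by time $t$ is of order $e^{\beta(1-\theta^2-kd)n}$, exponentially large precisely under the hypothesis $k < (1-\theta^2)/d$.

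The remaining and most delicate step is to upgrade this expectation estimate to an almost-sure bound uniform in $t \in [n, n+1]$ and $y \in B(0, \rho_t)$. I plan to execute this via a union bound on a sufficiently fine space-time mesh inside $[n, n+1] \times B(0, \rho_{n+1})$: apply Theorem~\ref{theorem2} at each mesh time and use standard Gaussian displacement bounds to reduce the intermediate-time event to the one at the nearest mesh point. The slack built in by working with $(\theta', k')$ rather than $(\theta, k)$ is what I expect to make the resulting Borel--Cantelli sum over $n$ converge despite the mesh being exponentially fine in $n$. This final interpolation is where I anticipate the main technical difficulty to lie.
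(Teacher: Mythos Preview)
Your opening Borel--Cantelli step on Theorem~\ref{theorem2} at integer times is exactly what the paper does: its proof of Corollary~\ref{corollary3} simply points to the Borel--Cantelli argument for Corollary~\ref{corollary1} and omits all details. In particular, the paper does not address the passage from the integer-time statement to the genuinely continuous-time conclusion ``$\forall\, t\geq t_0$'' in $\Omega_0$; the model proof of Corollary~\ref{corollary1} has the same feature, establishing convergence only along integers. So you are being more careful than the paper here, and the interpolation you flag is a real issue that the paper leaves to the reader.

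That said, your sketched interpolation has a quantitative problem. To reduce a time $t\in[n,n+1]$ to the nearest mesh time $s$ you need the maximal displacement of \emph{every} ancestral line over $[s,t]$ to be $o(r_t)\asymp e^{-\beta k n}$. With $\sim e^{\beta n}$ particles this forces mesh width $\delta_n\lesssim e^{-2\beta k n}/n$, hence $\gtrsim n\,e^{2\beta k n}$ mesh times in $[n,n+1]$. For the union bound over mesh times to be summable you then need the decay rate from Theorem~\ref{theorem2} to exceed $2\beta k$. But $I(\theta',k',0)\le I(\theta,k,0)$ (the slack parameters only \emph{decrease} the rate), and $I(\theta,k,0)\to 0$ as $kd\uparrow 1-\theta^2$ (indeed $f(\bar\rho)=\bar\rho\to 0$ in \eqref{ld}), so for $k$ near the boundary the sum diverges. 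Your intermediate first-moment paragraph is also a detour: it just recomputes $E[Z_t(B(y,r_t))]$ and does not feed into the mesh argument.

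A cleaner route is to go back inside the proof of Theorem~\ref{theorem2} rather than use it as a black box. In Part~I of that proof the conditional bound \eqref{kedibaba3} is super-exponentially small, of order $e^{-ct^2}$, because $\lfloor t\rfloor+1$ independent sub-BBMs each fail with probability at most $e^{-ct}$. This $e^{-ct^2}$ comfortably absorbs an exponentially fine time mesh in addition to the spatial meshes $n_t$ and $m_t$ already present; one can condition once on the configuration at a single stopping time in $[n,n+1]$ and then run Lemma~\ref{lemma2} uniformly over target balls indexed by both the spatial mesh and the time mesh. The leading cost stays at $\beta I(\theta,k,0)$ while the entire time interpolation is pushed into the SES term, and the resulting estimate is summable for every admissible $(\theta,k)$.
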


\begin{theorem}[Almost sure growth of enlargement of BBM]\label{theorem3}
Let $0\leq k\leq 1/d$, $r_0>0$ and $r:\mathbb{R}_+\rightarrow\mathbb{R}_+$ be defined by $r(t)=r_0\,e^{-\beta kt}$. Then,
\begin{equation}\underset{t\rightarrow\infty}{\lim}\frac{\textsf{vol}\left(Z^{r_t}_t\right)}{t^d}=[2\beta(1-kd)]^{d/2}\omega_d\ \quad \text{a.s.} \label{asresult2}
\end{equation}
\end{theorem}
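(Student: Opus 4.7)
The goal is to prove matching $\liminf$ and $\limsup$ bounds almost surely, using Corollary~\ref{corollary3} for the lower bound and a first-moment argument for the upper bound. For the lower bound, observe that the admissibility condition $0\leq k<(1-\theta^2)/d$ in Corollary~\ref{corollary3} is equivalent to $\theta<\sqrt{1-kd}$. Fixing any such $\theta$, that corollary gives $B(0,\theta\sqrt{2\beta}\,t)\subseteq Z_t^{r_t}$ eventually almost surely, and so $\liminf_{t\to\infty}\textsf{vol}(Z_t^{r_t})/t^d\geq(\theta\sqrt{2\beta})^d\omega_d$. Intersecting the resulting probability-one events over a countable sequence $\theta_m\uparrow\sqrt{1-kd}$ gives $\liminf_{t\to\infty}\textsf{vol}(Z_t^{r_t})/t^d\geq[2\beta(1-kd)]^{d/2}\omega_d$ almost surely; the endpoint $k=1/d$ makes this lower bound vacuous.

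For the upper bound, fix $\varepsilon>0$, set $R_\varepsilon(t):=\sqrt{2\beta(1-kd)}(1+\varepsilon)\,t$, and split $\mathrm{supp}(Z(t))$ into particles inside and outside $B(0,R_\varepsilon(t))$. A simple geometric bound gives
\[
\textsf{vol}(Z_t^{r_t})\;\leq\;\omega_d\bigl(R_\varepsilon(t)+r_t\bigr)^d \;+\; \omega_d\, r_t^d\, Z_t\bigl(\mathbb{R}^d\setminus B(0,R_\varepsilon(t))\bigr).
\]
The first term tends to $[2\beta(1-kd)]^{d/2}(1+\varepsilon)^d\omega_d\,t^d$. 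For the second, the many-to-one formula yields $E[Z_t(\mathbb{R}^d\setminus B(0,R_\varepsilon(t)))]=e^{\beta t}P(|B_t|>R_\varepsilon(t))$, and combining this with the standard Gaussian tail estimate and $r_t^d=r_0^d e^{-\beta k d t}$ produces a bound of the form $\mathrm{poly}(t)\exp[-\beta(1-kd)(2\varepsilon+\varepsilon^2)\,t]$ on the expectation of the second term. A Markov inequality followed by Borel--Cantelli summed over integer $n$ then shows, for any prescribed $\delta>0$, that the second term is less than $\delta\,n^d$ at $t=n$ eventually almost surely; taking $\varepsilon_m,\delta_m\downarrow 0$ along countable sequences yields the matching upper bound at integer times.

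The remaining step, and the main technical obstacle, is to upgrade the integer-time Borel--Cantelli conclusion to all $t\geq 0$, since $\textsf{vol}(Z_t^{r_t})$ is neither monotone nor obviously Lipschitz in $t$. I would control the second term on $t\in[n,n+1]$ by noting that any particle lying outside $B(0,R_\varepsilon(t))$ at time $t$ descends from an ancestor at time $n$ lying outside $B(0,R_\varepsilon(n)-s_n)$, where $s_n$ denotes the maximum displacement over $[n,n+1]$ among all ancestor--descendant pairs. A standard concentration bound on the maximum of at most $N_{n+1}=O(e^{\beta n})$ Brownian increments of variance at most $1$ yields $s_n=O(\sqrt{n})$ a.s., which is negligible compared to $R_\varepsilon(n)$. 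The first term is controlled easily since $R_\varepsilon(t)$ is linear in $t$ and $r_t$ is bounded. Finally, the endpoint $k=1/d$ can be disposed of by a direct computation: there $R_\varepsilon(t)\equiv 0$, the second-term expectation equals the constant $\omega_d r_0^d$, and hence $\textsf{vol}(Z_t^{r_t})=O(1)=o(t^d)$ almost surely, matching the lower bound $0$.
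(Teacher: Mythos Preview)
Your approach matches the paper's: both obtain the lower bound from the density/covering result (you invoke Corollary~\ref{corollary3}, the paper invokes Theorem~\ref{theorem2} directly, which amounts to the same thing), and both obtain the upper bound by a first-moment/Markov bound on the number of particles outside a ball of radius slightly larger than $\sqrt{2\beta(1-kd)}\,t$, combined with the trivial volume bound $\omega_d r_t^d$ per exterior particle, and then Borel--Cantelli. Your parameterization via $R_\varepsilon(t)=\sqrt{2\beta(1-kd)}(1+\varepsilon)t$ is equivalent to the paper's choice $\theta_2=\sqrt{1-kd+\varepsilon/2}$.

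Where you go further than the paper is in flagging and sketching the passage from integer times to all $t$, and in treating the endpoint $k=1/d$ separately. The paper simply says the result ``follows via a Borel--Cantelli argument similar to the one in the proof of Corollary~\ref{corollary1}'' and does not address either point. Your ancestor-displacement idea for the interpolation is sound, though to close it cleanly you should also control the branching over $[n,n+1]$ (e.g.\ the maximal family size among the $\sim e^{\beta n}$ time-$n$ particles is $O(n)$ a.s.\ by a geometric-tail union bound), and note that the integer-time second term is in fact exponentially small, not merely $o(n^d)$, so the extra polynomial factors are harmless. For $k=1/d$, your conclusion $\textsf{vol}(Z_t^{r_t})=O(1)$ a.s.\ is correct, but the justification should be the a.s.\ limit $N_t e^{-\beta t}\to M$ (so $r_t^d N_t\to r_0^d M$), not the boundedness of the expectation alone.
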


\section{Preparations}\label{section3}

{ \bf Notation:} We introduce further notation for the rest of the manuscript. For $x\in\mathbb{R}^d$, we use $|x|$ to denote its Euclidean norm. We use $c, c_0, c_1,\ldots$ as generic positive constants, whose values may change from line to line. If we wish to emphasize the dependence of $c$ on a parameter $p$, then we write $c_p$ or $c(p)$. We use $\mathbb{R}_+$ to denote the set of nonnegative real numbers, and write $o(t)$ to refer to $g(t)$, where $g:\mathbb{R}_+\to\mathbb{R}$ is a generic function satisfying $g(t)/t\rightarrow 0$ as $t\rightarrow\infty$.

We denote by $X=(X(t))_{t\geq 0}$ a generic standard Brownian motion in $d$-dimensions, and use $\mathbf{P}_x$ and $\mathbf{E}_x$, respectively, as the law of $X$ started at position $x\in\mathbb{R}^d$, and the corresponding expectation. Also, for $t>0$, $x,y\in\mathbb{R}^d$, and a Borel set $A\subseteq\mathbb{R}^d$, we denote by $p(t,x,y)$ and $p(t,x,A)$, respectively, the Brownian transition kernel and the probability that a Brownian motion that starts at $x$ falls inside $A$ at time $t$. We set $p(t,A):=p(t,0,A)$.

\smallskip

The following result says that the probability that there are no particles of BBM in a ball of fixed radius is an increasing function of the distance between the center of the ball and the starting point of the BBM. This is intuitively obvious, and is a direct consequence of the facts that the Brownian transition kernel is a decreasing function of $|x-y|$ and that each particle of BBM performs an independent Brownian motion while alive.

\begin{lemma}[Monotonicity of probability of absence] \label{lemma1}
Let $x_1$ and $x_2$ be in $\mathbb{R}^d$ with $|x_1|>|x_2|$, and $r>0$. Define $B_1:=B(x_1,r)$ and $B_2:=B(x_2,r)$. Then for any $t>0$,
$$P\left(Z_t(B_1)=0\right)\geq P\left(Z_t(B_2)=0\right) .$$
\end{lemma}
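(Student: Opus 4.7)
By the translation invariance of BBM and the rotational symmetry of $Z$ under $P$, one has $P(Z_t(B(x, r)) = 0) = 1 - u(t, |x|\mathbf{e}_1)$, where $u(s, y) := P_y(Z_s(B(0, r)) \geq 1)$ is rotationally symmetric about the origin and $\mathbf{e}_1$ is the first standard basis vector. Setting $a := |x_1|$ and $b := |x_2|$ (so $a > b \geq 0$), the lemma reduces to the one-parameter monotonicity $u(t, a\mathbf{e}_1) \leq u(t, b\mathbf{e}_1)$.

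The plan is to establish this via the moving-plane method applied to the FKPP equation for $u$: by McKean's classical representation, $u$ is the bounded classical solution of
\[
\partial_s u = \tfrac{1}{2}\Delta u + \beta\, u(1 - u), \qquad u(0, y) = \mathbf{1}_{B(0, r)}(y).
\]
Fix $c > 0$, let $H_c := \{y^1 = c\}$ and $\sigma_c$ denote reflection across $H_c$, and set $v(s, y) := u(s, y) - u(s, \sigma_c(y))$ on the half-space $\{y^1 > c\}$. Then $v$ vanishes on $H_c$, decays at spatial infinity, and satisfies the linear parabolic equation $\partial_s v = \tfrac{1}{2}\Delta v + \beta(1 - u - u\circ\sigma_c)\, v$ with bounded zeroth-order coefficient. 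Crucially, its initial data on the half-space is non-positive: $y^1 > c > 0$ forces
\[
|\sigma_c(y)|^2 - |y|^2 = 4c(c - y^1) < 0,
\]
so $\sigma_c(y)$ lies strictly closer to the origin than $y$ and therefore $\mathbf{1}_{B(0, r)}(\sigma_c(y)) \geq \mathbf{1}_{B(0, r)}(y)$. The parabolic maximum principle then propagates $v \leq 0$ to all $s > 0$; choosing $c = (a+b)/2$ and evaluating at $y = a\mathbf{e}_1$ (so that $\sigma_c(y) = b\mathbf{e}_1$) yields the required inequality.

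I anticipate the main obstacle to be the rigorous application of the maximum principle on an unbounded half-space with an unsigned (though bounded) zeroth-order coefficient. This is handled by the standard devices of absorbing the coefficient via an exponential weight $e^{-Ks}$ for $K > \|\beta(1 - u - u\circ\sigma_c)\|_\infty$ and exhausting $\{y^1 > c\}$ by bounded cylinders $B(0, R) \cap \{y^1 > c\}$, controlling boundary contributions as $R \to \infty$ by the decay of $u$ at spatial infinity. A purely probabilistic alternative is to prove the monotonicity of $F(a, t) := P(Z_t(B(a\mathbf{e}_1, r)) = 0)$ by Picard iteration on the first-branching-time decomposition
\[
F(a, t) = e^{-\beta t}\, \mathbf{P}_0\bigl(X(t) \notin B(a\mathbf{e}_1, r)\bigr) + \int_0^t \beta e^{-\beta s}\, \mathbf{E}_0\!\left[F\bigl(|a\mathbf{e}_1 - X(s)|,\, t-s\bigr)^2\right] ds,
\]
whose iterates preserve non-decreasingness in $a \geq 0$ thanks to a one-dimensional Anderson-type inequality: $\mathbf{E}[G(Y - a)]$ is non-decreasing in $|a|$ for any centered Gaussian $Y$ and any symmetric function $G$ that is non-decreasing on $[0, \infty)$, applied to the first coordinate of $X(s)$ after conditioning on its orthogonal complement.
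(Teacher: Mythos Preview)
Your proposal is correct and takes a genuinely different route from the paper. The paper also starts from the first-branching-time decomposition, writing $g(t,x)=P(Z_t(B(x,r))=0)$ and deriving
\[
g(t,x_2)-g(t,x_1)\le \int_0^t \mathbf{E}_0\bigl[g^2(u,x_2-X_{t-u})-g^2(u,x_1-X_{t-u})\bigr]\beta e^{-\beta(t-u)}\,du,
\]
but instead of Picard iteration or PDE comparison, it sets $\overline{w}=(g(\cdot,x_2-\cdot)-g(\cdot,x_1-\cdot))\vee 0$, bounds the integrand by $2\overline{w}$, and closes via Gr\"onwall's inequality on $F(u)=\mathbf{E}_0[\overline{w}(u,X_{t-u})]$ to force $\overline{w}(t,0)=0$. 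Your primary approach---FKPP plus moving-plane reflection plus the parabolic maximum principle---is more analytic and in fact yields more: it gives the full half-space comparison $u(s,y)\le u(s,\sigma_c(y))$ simultaneously for all $y$ with $y^1>c$, not merely at two fixed points. The paper's Gr\"onwall argument is shorter and purely probabilistic, avoiding any appeal to parabolic regularity or Phragm\'en--Lindel\"of-type control at infinity, but it treats only the pair $(x_1,x_2)$ at a time. Your alternative Picard route is closest in spirit to the paper, sharing the same integral equation; the distinction is that you propagate monotonicity through the iterates via Anderson's inequality, whereas the paper bypasses iteration and controls the positive part of the difference directly.
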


\begin{proof} Fix $r>0$ and let $g:\mathbb{R}_+\times\mathbb{R}^d\to[0,1]$ be defined by $g(t,x)=P\left(Z_t(B(x,r))=0\right)$. Condition on the first branching time as
\begin{align}
g(t,x)&=e^{-\beta t}[1-p(t,B(x,r))]+\int_0^t \mathbf{E}_0\left[g^2(t-s,x-X_s)\right]\beta e^{-\beta s}ds \nonumber \\
&=e^{-\beta t}[1-p(t,B(x,r))]+\int_0^t \mathbf{E}_0\left[g^2(u,x-X_{t-u})\right]\beta e^{-\beta (t-u)}du.
\end{align}
Then,
\begin{align}
g(t,x_2)-g(t,x_1)&=e^{-\beta t}[p(t,B_1)-p(t,B_2)]+ \nonumber \\
&\:\:\:\int_0^t\mathbf{E}_0\left[g^2(u,x_2-X_{t-u})-g^2(u,x_1-X_{t-u})\right]\beta e^{-\beta (t-u)}du \nonumber \\
&\leq \int_0^t\mathbf{E}_0\left[g^2(u,x_2-X_{t-u})-g^2(u,x_1-X_{t-u})\right]\beta e^{-\beta (t-u)}du, \label{eq30}
\end{align} 
where we have used that in the first line the first term on the right-hand side is negative due to the monotonicity of $p(t,x,y)$ in $|x-y|$. Define 
$$w(t,x):=g(t,x_2-x)-g(t,x_1-x),\quad \overline{w}:=w\vee 0,$$ 
where we use $a\vee b$ to denote the maximum of the numbers $a$ and $b$. Note that
\begin{align}
g^2(u,x_2-x)-g^2(u,x_1-x)&=[g(u,x_2-x)+g(u,x_1-x)][g(u,x_2-x)-g(u,x_1-x)] \nonumber \\
&\leq 2\,\overline{w}(u,x). \nonumber
\end{align}
It follows from \eqref{eq30} that
\begin{equation} \overline{w}(t,0)\leq \int_0^t \mathbf{E}_0\left[2\,\overline{w}(u,X_{t-u})\right]\beta e^{-\beta (t-u)}du. \label{eq31}
\end{equation}
Note that if $\overline{w}(t,0)=0$, then \eqref{eq31} holds since the right-hand side is nonnegative, and if $\overline{w}(t,0)>0$, then \eqref{eq31} holds by definition of $\overline{w}$ and by \eqref{eq30}. For $0\leq u\leq t$, define $F(u):=\mathbf{E}_0\left[\overline{w}(u,X_{t-u})\right]$, and note that $F(t)=\overline{w}(t,0)$. Then, \eqref{eq31} yields 
\begin{equation} F(t)\leq \int_0^t 2\beta e^{-\beta(t-u)}F(u)du,
\end{equation}
and by Gr\"onwall's inequality we conclude that $F(t)\leq 0$. Hence, $\overline{w}(t,0)\leq 0$. But $\overline{w}(t,0)\geq 0$ by definition. Therefore, $\overline{w}(t,0)=0$, that is, $g(t,x_2)-g(t,x_1)\leq 0$, which means that $g(t,x_1)\geq g(t,x_2)$ as claimed.
\end{proof}

Next, we list two well-known results; the first one is about the global growth of branching systems, and the second one about the large-time asymptotic probability of atypically large Brownian displacements. These results will be useful in the proofs of the main theorems and Lemma~\ref{lemma1}. For the proofs of Proposition A and Proposition B, see for example \cite[Sect.\ 8.11]{KT1975} and \cite[Lemma 5]{OCE2017}, respectively.

\begin{propa}[Distribution of mass in branching systems]\label{proposition2}
For a strictly dyadic continuous-time branching process $N=(N(t))_{t\geq 0}$ with constant branching rate $\beta>0$, the probability distribution at time $t$ is given by 
\begin{equation} P(N(t)=k)=e^{-\beta t}(1-e^{-\beta t})^{k-1},\quad k\geq 1, \nonumber
\end{equation}
from which it follows that
\begin{equation} P(N(t)>k)=(1-e^{-\beta t})^k  \label{eqprop}.
\end{equation}
\end{propa}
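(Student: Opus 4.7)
The plan is to identify $N=(N(t))_{t\geq 0}$ as a continuous-time pure-birth (Yule) process on $\{1,2,\ldots\}$: in state $k$ each of the $k$ currently alive particles branches independently at rate $\beta$, every branching event deterministically raises the count by one, so the aggregate jump rate out of $k$ is $k\beta$ and all jumps go from $k$ to $k+1$. Once this is in place I would compute the tail $P(N(t)>k)$ directly from the embedded jump times, and then read off the pointwise distribution by differencing.

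Let $T_k$ denote the first time the process exceeds $k$ particles. By the strong Markov property and the Yule rates identified above,
$$T_k \;=\; \sum_{j=1}^{k}\xi_j,\qquad \xi_j \text{ independent},\ \xi_j\sim\mathrm{Exp}(j\beta).$$
The core analytic input is the classical identity
$$\sum_{j=1}^{k}\xi_j \;\stackrel{d}{=}\; \max_{1\leq j\leq k}E_j,$$
where $E_1,\ldots,E_k$ are i.i.d.\ $\mathrm{Exp}(\beta)$; this is verified either by matching moment generating functions (both sides equal $\prod_{j=1}^k \beta/(\beta+s/j)$ after the usual partial-fraction manipulation of the right-hand density) or by induction on $k$ via convolution. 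Granting it, one obtains $P(T_k\leq t)=P(\max_j E_j\leq t)=(1-e^{-\beta t})^k$, and since $\{N(t)>k\}=\{T_k\leq t\}$ this is exactly \eqref{eqprop}. The pointwise formula then falls out by $P(N(t)=k)=P(N(t)>k-1)-P(N(t)>k)=e^{-\beta t}(1-e^{-\beta t})^{k-1}$.

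As a self-contained alternative (and a sanity check), one may derive the probability generating function $F(t,z):=E[z^{N(t)}]$ by conditioning on the first branching time, obtaining the integral equation $F(t,z)=e^{-\beta t}z+\int_0^t\beta e^{-\beta s}F(t-s,z)^2\,ds$, which upon differentiation reduces to the separable ODE $\partial_t F=\beta F(F-1)$ with $F(0,z)=z$. Integrating gives
$$F(t,z)=\frac{z\,e^{-\beta t}}{1-z(1-e^{-\beta t})},$$
whose geometric-series expansion in $z$ recovers the claimed distribution. The main (and only nontrivial) obstacle in either route is the exponential-sum identity, respectively the solution of the ODE; everything else is a routine summation.
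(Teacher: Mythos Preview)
Your proof is correct. Note, however, that the paper does not actually prove Proposition~A: it simply cites \cite[Sect.~8.11]{KT1975} and moves on. So there is no ``paper's own proof'' to compare against in any substantive sense.

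That said, your write-up supplies exactly what the citation would have provided. The first route---identifying $N$ as a Yule process, writing the first passage time $T_k$ as a sum of independent $\mathrm{Exp}(j\beta)$ holding times, and invoking the R\'enyi representation $\sum_{j=1}^k\xi_j\stackrel{d}{=}\max_{1\le j\le k}E_j$---is clean and gives the tail formula \eqref{eqprop} directly, which is what the paper actually uses downstream (e.g.\ in \eqref{eq21}). The second route via the generating function and the ODE $\partial_tF=\beta F(F-1)$ is the classical textbook derivation and is what one finds in Karlin--Taylor. One minor quibble: in your parenthetical you write the common transform as $\prod_{j=1}^k\beta/(\beta+s/j)$, which is the Laplace transform rather than the MGF; the distinction is immaterial here but worth keeping straight. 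Otherwise both arguments are complete and either would serve as a self-contained replacement for the citation.
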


\begin{propb}[Linear Brownian displacements]\label{proposition3}
Let $X=(X(t))_{t\geq 0}$ represent a standard $d$-dimensional Brownian motion starting at the origin, and $\mathbf{P}_0$ the corresponding probability. Then, for $\gamma>0$ as $t\rightarrow\infty$,
\begin{equation}  \mathbf{P}_0\left(\underset{0\leq s\leq t}{\sup}|X(s)|>\gamma t\right)=\exp[-\gamma^2t/2+o(t)].  \label{prop3}
\end{equation}
\end{propb}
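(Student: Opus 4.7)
The plan is to establish matching lower and upper bounds for $\frac{1}{t}\log \mathbf{P}_0(\sup_{0\leq s\leq t}|X(s)|>\gamma t)$, each tending to $-\gamma^2/2$ as $t\to\infty$.

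For the lower bound, I would use the chain of inclusions $\{X_1(t)>\gamma t\}\subseteq\{|X(t)|>\gamma t\}\subseteq\{\sup_{s\leq t}|X(s)|>\gamma t\}$, where $X_1$ denotes the first coordinate, reducing matters to
\begin{equation*}
\mathbf{P}_0\Bigl(\sup_{s\leq t}|X(s)|>\gamma t\Bigr)\geq\mathbf{P}(Z>\gamma\sqrt{t}),
\end{equation*}
with $Z$ standard normal. The classical Gaussian tail asymptotic $\mathbf{P}(Z>a)\sim\frac{1}{a\sqrt{2\pi}}e^{-a^2/2}$ as $a\to\infty$ would then give $\log\mathbf{P}(Z>\gamma\sqrt{t})=-\gamma^2 t/2+o(t)$, yielding $\liminf_{t\to\infty}\frac{1}{t}\log\mathbf{P}_0(\sup_s|X(s)|>\gamma t)\geq -\gamma^2/2$.

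For the upper bound, I would reduce the radial supremum to a finite collection of one-dimensional Brownian suprema, where the reflection principle applies directly. Fixing $\varepsilon>0$, I would choose a finite $\varepsilon$-net $\{u_1,\dots,u_N\}\subseteq S^{d-1}$ of the unit sphere, with $N=N(\varepsilon,d)$ independent of $t$. For any $x\in\mathbb{R}^d$ with $|x|>\gamma t$, selecting $u_i$ within Euclidean distance $\varepsilon$ of $x/|x|$ yields $u_i\cdot x\geq(1-\varepsilon)\gamma t$, so that
\begin{equation*}
\Bigl\{\sup_{s\leq t}|X(s)|>\gamma t\Bigr\}\subseteq\bigcup_{i=1}^N\Bigl\{\sup_{s\leq t}u_i\cdot X(s)>(1-\varepsilon)\gamma t\Bigr\}.
\end{equation*}
Each projection $u_i\cdot X(\cdot)$ is a standard one-dimensional Brownian motion, so the reflection principle combined with the Gaussian tail bound yields $\mathbf{P}_0(\sup_{s\leq t}u_i\cdot X(s)>(1-\varepsilon)\gamma t)\leq 2\,e^{-(1-\varepsilon)^2\gamma^2 t/2}$. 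A union bound would then give the overall bound $2N\,e^{-(1-\varepsilon)^2\gamma^2 t/2}$; taking logarithms, dividing by $t$, and finally letting $\varepsilon\to 0$ produces $\limsup_{t\to\infty}\frac{1}{t}\log \mathbf{P}_0(\sup_s|X(s)|>\gamma t)\leq -\gamma^2/2$.

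The main point requiring care will be the upper bound in dimensions $d\geq 2$, where the reflection principle does not apply directly to the radial process $|X|$. The covering argument circumvents this by reducing one radial supremum to finitely many linear suprema, and the crucial observation is that $N$ depends only on $\varepsilon$ and $d$, not on $t$, so the $\log N$ contribution is absorbed into the $o(t)$ term.
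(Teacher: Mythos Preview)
Your argument is correct. Note, however, that the paper does not actually prove Proposition~B; it simply records the statement as a well-known fact and directs the reader to \cite[Lemma~5]{OCE2017} for a proof. So there is no in-paper argument to compare against.

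That said, your self-contained proof is entirely sound: the lower bound via a single coordinate and the Gaussian tail asymptotic is standard, and the upper bound via an $\varepsilon$-net on $S^{d-1}$ together with the reflection principle for the one-dimensional projections is a clean way to handle the $d$-dimensional radial supremum. One minor remark: from $|u_i - x/|x|| < \varepsilon$ you actually get the slightly stronger inequality $u_i\cdot x/|x| > 1-\varepsilon^2/2$, but your weaker bound $u_i\cdot x \geq (1-\varepsilon)\gamma t$ is of course sufficient since $\varepsilon\to 0$ at the end.
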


\section{Mass in a moving and shrinking ball}\label{section4}

The following lemma says that exponentially few particles in a moving and shrinking ball, is exponentially unlikely. It constitutes the first step of a two-step bootstrap argument, which we use to prove the upper bound of \eqref{ld0} in Theorem~\ref{theorem1}. The proof of the upper bound of Theorem~\ref{theorem1} will sharpen the constant on the right-hand side of \eqref{eq40} below.

\begin{lemma}\label{lemma2}
Let $0\leq \theta<1$, $0\leq k\leq (1-\theta^2)/d$, $r_0>0$, and $\textbf{e}$ be any unit vector in $\mathbb{R}^d$. Let $x:\mathbb{R}_+\to\mathbb{R}_+$ and $r:\mathbb{R}_+\to\mathbb{R}_+$ be defined by $x(t)=\theta\sqrt{2\beta}t$ and $r(t)=r_0 e^{-\beta kt}$. For $t\geq 0$ define $B_t=B(x(t)\textbf{e},r(t))$. Then, for each $0\leq a<1-\theta^2-kd$, there exists a constant $c=c(\beta,d,\theta,k,a)>0$ such that
\begin{equation} \underset{t\rightarrow\infty}{\limsup}\:\frac{1}{t}\log P\left(Z_t(B_t)<e^{\beta a t}\right)\leq-c. \label{eq40}
\end{equation} 
\end{lemma}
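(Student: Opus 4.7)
My plan is to prove Lemma~\ref{lemma2} by a sub-BBM bootstrap: split $[0,t]$ at $s_0:=\rho_0 t$ for a small $\rho_0>0$ (to be tuned in terms of $\theta,k,a$), produce a large independent family of sub-BBMs near the origin during $[0,s_0]$, show that each sub-BBM contributes to $B_t$ over the remaining window $[s_0,t]$ with at least sub-exponential probability, and combine these contributions via independence. Throughout, set $T:=(1-\rho_0)t$ and $N_t:=\lceil e^{\beta\rho_0 t/2}\rceil$.

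First I will establish a good initial configuration event
$$G_t:=\bigl\{N_{s_0}\geq N_t\bigr\}\cap\bigl\{\mathrm{supp}(Z(s_0))\subseteq B(0,2\sqrt{2\beta}\,\rho_0 t)\bigr\}.$$
Proposition~A gives $P(N_{s_0}\geq N_t)\geq 1-N_t e^{-\beta\rho_0 t}\geq 1-e^{-\beta\rho_0 t/2}$. Combining Proposition~A once more to cap $N_{s_0}\leq e^{2\beta\rho_0 t}$ (with super-exponentially small failure probability) with Proposition~B applied to each surviving Brownian trajectory (threshold $\gamma=2\sqrt{2\beta}$), and a union bound, I obtain $P(G_t^c)\leq e^{-c_0 t}$ for some $c_0=c_0(\beta,\rho_0)>0$.

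Next, on $G_t$ I mark $N_t$ particles at time $s_0$ with positions $x_1,\dots,x_{N_t}$, and let $Z^{(i)}$ denote the independent sub-BBM issued from $x_i$. Setting $W_i:=Z_T^{(i)}(B_t-x_i)$, the branching Markov property gives independence of the $W_i$ given $(x_i)$, and $Z_t(B_t)\geq\sum_{i=1}^{N_t}W_i$; this reduces the problem to
$$P_x\bigl(W\geq e^{\beta a t}\bigr)\geq p_t,\qquad p_t\geq e^{-o(t)},\qquad(\ast)$$
uniformly in $|x|\leq 2\sqrt{2\beta}\,\rho_0 t$, where $W:=Z_T(B_t-x)$ for a sub-BBM starting at $x$. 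For this I will apply Paley--Zygmund to $W$: the many-to-one formula with a Gaussian estimate for the kernel gives
$$E_x[W]=e^{\beta T}p(T,x,B_t)=\exp\!\bigl(\beta\gamma(\rho_0)t+o(t)\bigr),\qquad \gamma(\rho_0):=(1-\rho_0)-kd-\tfrac{\theta^2}{1-\rho_0},$$
and since $\gamma(\rho_0)\to 1-\theta^2-kd>a$ as $\rho_0\downarrow 0$, choosing $\rho_0$ small makes $E_x[W]\gg e^{\beta a t}$. The standard BBM second-moment identity
$$E_x[Z_T(B)^2]=E_x[Z_T(B)]+2\beta\int_0^T e^{\beta u}\,\mathbf{E}_x\!\Bigl[\bigl(e^{\beta(T-u)}p(T-u,X_u,B)\bigr)^2\Bigr]du$$
combined with explicit Gaussian integrals will bound $E_x[W^2]/E_x[W]^2$ by a sub-exponential factor in $t$, and Paley--Zygmund then delivers $(\ast)$.

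Granting $(\ast)$ and the independence of the $W_i$, I conclude
$$P\!\Bigl(\sum_{i=1}^{N_t} W_i<e^{\beta a t}\;\Big|\;G_t\Bigr)\leq(1-p_t)^{N_t}\leq\exp\!\bigl(-e^{\beta\rho_0 t/2-o(t)}\bigr),$$
which is super-exponentially small; combined with $P(G_t^c)\leq e^{-c_0 t}$ this proves \eqref{eq40} with $c=c_0/2$, say. The \textbf{main obstacle} is $(\ast)$: the Gaussian integral for the second moment picks up a contribution from pairs of particles sharing a common ancestor at some intermediate time, and when the effective angular parameter $\theta/(1-\rho_0)$ of the sub-BBM is close to $1$ the optimal ancestor time is early and the variance-to-mean-square ratio is \emph{a priori} exponentially large. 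The delicate point is to tune $\rho_0=\rho_0(\theta,k,a)$ so that this ratio, although possibly super-polynomial in $t$, remains sub-exponential and is thus absorbed by the $N_t=e^{\beta\rho_0 t/2}$ independent sub-BBMs in the final Chernoff step.
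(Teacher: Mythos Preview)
Your outline has a genuine gap at the step you yourself flag as the ``main obstacle,'' and the proposed fix---tuning $\rho_0$ so that the Paley--Zygmund ratio stays sub-exponential---is impossible for $\theta$ close to $1$. A direct Gaussian computation (the one you sketch) shows that for a sub-BBM of length $T=(1-\rho_0)t$ aimed at a ball centred at distance $\approx\theta\sqrt{2\beta}\,t$, the second-moment integral is maximised at an interior time whenever $\theta>(1-\rho_0)/\sqrt{2}$, and then
\[
\frac{E_x[W^2]}{E_x[W]^2}\;=\;\exp\!\left(\frac{\beta t}{1-\rho_0}\,\bigl((1-\rho_0)-\sqrt{2}\,\theta\bigr)^2+o(t)\right).
\]
For every $\theta>1/\sqrt{2}$ this is genuinely exponential in $t$ no matter how small you take $\rho_0$, so $p_t$ from Paley--Zygmund is at best $e^{-\alpha t}$ with $\alpha>0$. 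Your fallback---absorb $p_t$ into $N_t$---requires $\rho_0(1-\rho_0)>\bigl((1-\rho_0)-\sqrt{2}\,\theta\bigr)^2$ even if you are generous and take $N_t\approx e^{\beta\rho_0 t}$; setting $v=1-\rho_0$ this is $2v^2-(1+2\sqrt{2}\,\theta)v+2\theta^2<0$, whose discriminant $1+4\sqrt{2}\,\theta-8\theta^2$ is negative for all $\theta>(2+\sqrt{2})/4\approx 0.854$. So for $\theta$ in, say, $(0.86,1)$ there is \emph{no} admissible $\rho_0$, and the argument collapses precisely in the regime where the lemma is most needed (the density application in Theorem~\ref{theorem2} requires $\theta$ ranging up to any value below $1$).

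The paper avoids this entirely by a different decomposition: it uses the already-proved large-deviation bound for \emph{fixed}-radius balls (\cite[Thm.~1]{O2018}) to place $\exp[\beta(1-\theta^2-\delta)t]$ particles inside $B(x_t\mathbf{e},r_0)$ at time $t-1$, and then lets each of them make a unit-time Brownian step into the shrunken ball $B_t$, which succeeds with probability $\gtrsim e^{-\beta k d t}$. The point is that the ``hard'' transport to distance $\theta\sqrt{2\beta}\,t$ is handled by the black-box fixed-radius result, and only an $O(1)$-time, $O(1)$-distance transition remains---so no second-moment control over a long time window is ever needed. If you want to salvage your route, you would have to replace Paley--Zygmund by a spine/change-of-measure or truncated second-moment argument that gives $p_t\ge e^{-o(t)}$ uniformly in $\theta<1$; as written, the proposal does not close.
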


\begin{remark}
Note that $B=(B_t)_{t\geq 0}$ represents a linearly moving and exponentially shrinking ball. Using a many-to-one formula, we have
\begin{align}
E[Z_t(B_t)]=E[Z_t(\mathbb{R}^d)]\times p(t,B_t)&=e^{\beta t}\times\frac{1}{(2\pi t)^{d/2}}\int_{B_t}e^{-|x|^2/(2t)}\text{d}x \nonumber \\
&=e^{\beta t(1-\theta^2-kd)+o(t)}, \label{panter2}
\end{align}
where $p(t,A)$ is as before the Brownian transition probability from the origin to the Borel set $A$ at time $t$. Since $a<1-\theta^2-kd$ in the lemma above, $a$ is an atypically small exponent for the mass in $B_t$ at time $t$.
\end{remark}

\begin{proof}To start the proof, for $0\leq a<1-\theta^2-kd$ and $t>0$, let
$$A_t:=\left\{Z_t(B_t)<e^{\beta at}\right\},$$
and split the interval $[0,t]$ into two pieces as $[0,\delta t]$ and $[\delta t,t]$, where $0<\delta<1$ is small enough so as to satisfy $$a<1-\theta^2-kd-\delta. $$ 

For $t\geq 0$, set $x_t=x(t)$ and $r_t=r(t)$ for notational convenience. Consider the ball $B(x_t\textbf{e},r_0)$ so that $B_t\subseteq B(x_t\textbf{e},r_0)$ for all $t>0$. Next, for $t>0$, define the event
$$E_t:=\left\{Z_{t-1}\left(B(x_t\textbf{e},r_0)\right)\geq \exp\left[\beta(1-\theta^2-\delta)t\right]\right\},$$
and estimate
\begin{equation}
P(A_t)\leq P(A_t\mid E_t)+P(E_t^c). \label{eq41}
\end{equation}
Using \cite[Theorem 1]{O2018}, since $\beta(1-\theta^2-\delta)$ is an atypically small exponent for the mass inside $B(x_t\textbf{e},r_0)$ at time $t-1$, for all large $t$, $P(E_t^c)$ can be bounded from above as
\begin{equation}
P(E_t^c)\leq e^{-c_1t} \label{eq42}
\end{equation}
for some $c_1=c_1(\beta,\theta,\delta)>0$. (Note that $\delta=\delta(d,\theta,k,a)$.) Next, we show that $P(A_t\mid E_t)$ on the right-hand side of \eqref{eq41} is SES in $t$. 

Conditional on the event $E_t$, there are at least $\exp\left[\beta(1-\theta^2-\delta)t\right]$ particles in $B(x_t\textbf{e},r_0)$ at time $t-1$. Apply the branching Markov property at time $t-1$. For an upper bound on the mass inside $B_t$ at time $t$, neglect possible branching of the particles present in $B(x_t\textbf{e},r_0)$ at time $t-1$ over the period $[t-1,t]$, and assume that each one evolves as an independent Brownian particle over $[t-1,t]$ starting from her position at time $t-1$. Uniformly over $x\in B(0,r_0)$, a standard calculation yields
\begin{align}
p(1,x,B(0,r_t))=&\int_{B(0,r_t)}p(1,x,y)dy=\frac{1}{(\sqrt{2\pi})^d}\int_{B(0,r_t)}e^{-|y-x|^2/2}dy \nonumber \\
\geq&\, \frac{e^{-(r_0+r_t)^2/2}}{(\sqrt{2\pi})^d}\,\textsf{vol}\left(B(0,r_t)\right) \nonumber \\
\geq&\, \frac{e^{-(2 r_0)^2/2}}{(\sqrt{2\pi})^d}\,\omega_d r_t^d\,=\,c_2 \exp[-\beta(kd)t] \label{eq43}
\end{align}
for some constant $c_2>0$, and we have used in the second inequality that $r_t\leq r_0$ for all $t>0$. By translation invariance, uniformly over $x\in B(x_t\textbf{e},r_0)$,
$$p(1,x,B(x_t\textbf{e},r_t))\geq c_2 \exp[-\beta(kd)t].$$
Now for $t>t_0$, where $t_0$ is large enough, let 
$$p_t:=c_2 e^{-\beta(kd)t},\:\: q_t:=1-p_t,\:\: M_t:=\left\lceil e^{\beta(1-\theta^2-\delta)t}\right\rceil,$$ and let $Y_t$ be a random variable, which under the law $Q$, has a binomial distribution with parameters $M_t$ and $p_t$. (Here, $p_t$ is the probability of `success,' and $M_t$ is the number of trials.) Note that each particle present in $B(x_t\textbf{e},r_0)$ at time $t-1$ moves independently of others over $[t-1,t]$, and that conditional on $E_t$ there are at least $M_t$ particles in $B(x_t\textbf{e},r_0)$ at time $t-1$. Therefore, it follows that 
\begin{equation}
P(A_t\mid E_t)\leq Q(Y_t\leq e^{\beta at}). \label{panter}
\end{equation}
We now bound $Q(Y_t\leq e^{\beta at})$ from above as 
\begin{equation}
Q(Y_t\leq e^{\beta at})\leq \sum_{k=0}^{\left\lceil e^{\beta at}\right\rceil}Q(Y=k)=\sum_{k=0}^{\left\lceil e^{\beta at}\right\rceil}\binom{M_t}{k}p_t^k q_t^{M_t-k} 
\leq\sum_{k=0}^{\left\lceil e^{\beta at}\right\rceil}(M_t)^{\left\lceil e^{\beta at}\right\rceil}q_t^{M_t}, \label{eq44}
\end{equation}
where we have used that $p_t\leq q_t$ for all large $t$, and $\binom{M_t}{k}\leq (M_t)^{\left\lceil e^{\beta at}\right\rceil}$ for $0\leq k\leq\left\lceil e^{\beta at}\right\rceil$. We then bound $q_t^{M_t}$ from above as
\begin{align}
q_t^{M_t}=\left(1-c_2 e^{-\beta(kd)t}\right)^{\left\lceil \exp[\beta(1-\theta^2-\delta)t]\right\rceil}\leq &
\left[\left(1-\frac{c_2}{e^{\beta(kd)t}}\right)^{e^{\beta(kd)t}}\right]^{\frac{e^{\beta(1-\theta^2-\delta)t}}{e^{\beta(kd)t}}} \nonumber \\
\leq & \exp\left[-c_2\,\frac{e^{\beta(1-\theta^2-\delta)t}}{e^{\beta(kd)t}}\right]=\exp\left[-c_2 e^{\beta t(1-\theta^2-kd-\delta)}\right], \label{eq45}
\end{align}
where we have used the elementary estimate $(1+x)\leq e^x$ in passing to the second inequality. From \eqref{eq44} and \eqref{eq45}, it follows that for all large $t$
$$Q(Y_t\leq e^{\beta at})\leq \left(\left\lceil e^{\beta at}\right\rceil+1\right)\exp\left[2\beta(1-\theta^2)t e^{\beta at}\right] \exp\left[-c_2 e^{\beta t(1-\theta^2-\delta-kd)}\right],$$
which is SES in $t$ since $a<1-\theta^2-\delta-kd$ by the choice of $\delta$. Therefore, it follows from \eqref{panter} that $P(A_t\mid E_t)$ is SES in t as well. This completes the proof in view of \eqref{eq41} and \eqref{eq42}.
\end{proof}

Next, we prove Corollary~\ref{corollary1}. We prove Corollary~\ref{corollary1} before Theorem~\ref{theorem1} since the former will be used to prove the latter; nonetheless, we prefer to call the former a corollary of the latter, since the latter is a stronger result, which can be used to prove the former as well.

\subsection{Proof of Corollary~\ref{corollary1}}
Define
$$\Omega_0:=\{\omega:\forall \,\varepsilon>0\:\:\exists\:  t_0=t_0(\omega)\: \text{such that} \:\: \forall \:t\geq t_0\:\:\Bigl\lvert\frac{1}{t}\log Z_t(B_t)-\beta(1-\theta^2-kd)\Bigr\rvert<\varepsilon \}.$$
We will show that $P(\Omega_0)=1$. Let $\varepsilon>0$. For all large $t$, 
\begin{align}
& P\left(\Bigl\lvert\frac{1}{t}\log Z_t(B_t)-\beta(1-\theta^2-kd)\Bigr\rvert\geq\varepsilon\right) \nonumber \\
&\quad=P\left(\frac{1}{t}\log Z_t(B_t)\geq \beta(1-\theta^2-kd)+\varepsilon\right)+P\left(\frac{1}{t}\log Z_t(B_t)\leq \beta(1-\theta^2-kd)-\varepsilon\right) \nonumber \\
&\quad=P\left(Z_t(B_t)\geq \exp[\beta(1-\theta^2-kd)t+\varepsilon t]\right)+P\left(Z_t(B_t)\leq \exp[\beta(1-\theta^2-kd)t-\varepsilon t]\right) \label{eq47} \\
&\quad\leq\frac{E[Z_t(B_t)]}{\exp[\beta(1-\theta^2-kd)t+\varepsilon t]}+e^{-c_0 t}\leq e^{-\varepsilon t+o(t)}+e^{-c_0 t}\leq e^{-c_1 t}, \label{eq48}
\end{align}
where we have used the Markov inequality to bound the first term, and Lemma~\ref{lemma2} to bound the second term on the right-hand side of \eqref{eq47}, and used \eqref{panter2} to bound $E[Z_t(B_t)]$ in the last line. For $n\in\mathbb{N}$, define the events
$$A_n:=\left\{\biggl\lvert\frac{1}{n}\log Z_n(B_n)-\beta(1-\theta^2-kd)\biggr\rvert\geq\varepsilon\right\}.$$
Then, by \eqref{eq48}, there exists $m\in\mathbb{N}$ such that for each $n\geq m$, $P(A_n)\leq e^{-c_1 n}$. Since 
$$\sum_{n=1}^\infty P(A_n)=c_2+\sum_{n=m}^\infty P(A_n)=c_2+\sum_{n=m}^\infty e^{-c_1 n}\leq c_2+\frac{1}{1-e^{-c_1}}<\infty ,$$
by Borel-Cantelli lemma it follows that $P(A_n\:\:\text{occurs}\:\:\text{i.o.})=0$, where i.o. stands for \emph{infinitely often}. Choosing $\varepsilon=1/k$, this implies that for each $k\geq 1$, we have 
$$ P(\Omega_k)=1,\quad \Omega_k:=\left\{\omega:\exists\:  n_0=n_0(\omega)\: \text{such that} \:\: \forall \:n\geq n_0\:\:\Bigl\lvert\frac{1}{n}\log Z_n(B_n)-\beta(1-\theta^2-kd)\Bigr\rvert<\frac{1}{k}\right\}.$$
Then, $P(\Omega_0)=P\left(\bigcap_{k\geq 1}\Omega_k\right)=1$ since $P(\Omega_k)=1$ for each $k\geq 1$. \qed

\subsection{Proof of Theorem~\ref{theorem1}}

Theorem~\ref{theorem1} is proved in the same spirit as \cite[Thm.\ 1]{O2018}. For the lower bound, we find a strategy that realizes the desired event with optimal probability on a logarithmic scale. The proof of the upper bound can be viewed as the second step of a bootstrap argument, whose first step was completed by Lemma~\ref{lemma2}; it sharpens the constant on the right-hand side of \eqref{eq40} so as to show that the strategy that gives the lower bound is indeed optimal.

\subsubsection{Proof of the lower bound}

Fix $0\leq \theta<1$, $0\leq k<(1-\theta^2)/d$, and a unit vector $\textbf{e}$. For $0\leq a<1-\theta^2-kd$, define the event
$$A_t=\left\{Z_t(B_t)<e^{\beta at}\right\},$$
and define
$$\bar\rho=\bar\rho(\theta,k,a)=1-(a+kd)/2-\sqrt{((a+kd)/2)^2+\theta^2},$$
which is chosen so that $(1-\bar\rho)^2-(a+kd)(1-\bar\rho)=\theta^2$. Let $0<\rho\leq\bar\rho$ and $\varepsilon>0$. Let $E_t$ be the event that in the time interval $[0,\rho t]$, the branching is completely suppressed and the initial Brownian particle is moved to a distance of 
$$d(t):=\left(\sqrt{(1-\rho)^2-(a+kd)(1-\rho)}-\theta+\varepsilon\right)\sqrt{2\beta}t+o(t)$$
from the origin in the opposite direction of $\mathbf{e}$. By the independence of branching and motion mechanisms of BBM, this partial strategy over $[0,\rho t]$ has probability
\begin{equation}P(E_t)=\exp\left[-\beta\left(\rho+\frac{\left(\sqrt{(1-\rho)^2-(a+kd)(1-\rho)}-\theta+\varepsilon\right)^2}{\rho}\right)t+o(t)\right], \label{eq49}
\end{equation}
where the first term under the exponent comes from suppressing the branching, and the second term from the linear Brownian displacement. By the Markov property applied at time $\rho t$, it is clear that $P(A_t\mid E_t)$ is the same as the probability that a BBM starting with a single particle at position $(-d(t)+o(t))\textbf{e}$ gives a mass of less than $e^{\beta at}$ to $B_t$ at time $(1-\rho)t$. Since the distance between the position of the single particle at time $\rho t$ and the center of $B_t$ is 
$$d(t)+\theta\sqrt{2\beta}t+o(t)=\left(\sqrt{(1-\rho)^2-(a+kd)(1-\rho)}+\varepsilon\right)\sqrt{2\beta}t+o(t),$$
and since
$$(1-\rho)-\frac{\left(\sqrt{(1-\rho)^2-(a+kd)(1-\rho)}+\varepsilon\right)^2}{1-\rho}-kd<a ,$$
Corollary~\ref{corollary1} implies that $P(A_t\mid E_t)=\exp[o(t)]$. Then, from the estimate $P(A_t)\geq P(E_t)P(A_t \mid E_t)$ and \eqref{eq49}, it follows that
\begin{equation} \underset{t\rightarrow\infty}{\liminf}\:\frac{1}{t}\log P(A_t)\geq-\beta\left[\rho+\frac{\left(\sqrt{(1-\rho)^2-(a+kd)(1-\rho)}-\theta+\varepsilon\right)^2}{\rho}\right]. \label{eq50}
\end{equation}
Optimize the right-hand side of \eqref{eq50} over $\rho\in(0,\bar\rho]$ to complete the proof of the lower bound.

\subsubsection{Proof of the upper bound}

We refer the reader to the proof of the upper bound of \cite[Thm.\ 1]{O2018}; simply change the parameter $a$ by $a+kd$ in the equations (19), (21), (24)-(26) therein. The proof of the upper bound of Theorem~\ref{theorem1} is otherwise identical to that of \cite[Thm.\ 1]{O2018}. We note that in the present work a similar (but not identical) technique is used later for the proof of the upper bound of Theorem~\ref{theorem2}. Therefore, to avoid duplication, here we simply refer the reader to the proof of \cite[Thm.\ 1]{O2018}.

\section{Density of BBM}\label{section5}

In this section, we prove Theorem~\ref{theorem2}. The lower bound is a direct consequence of Theorem~\ref{theorem1}. The proof of the upper bound uses a method similar to that of \cite[Thm.1]{E2003} and \cite[Thm.1]{O2018}, along with elementary geometric arguments. Also, it can be viewed as the second step of a bootstrap argument, whose first step was completed by Lemma~\ref{lemma2}.  

\subsection{Theorem~\ref{theorem2} -- Proof of the lower bound}

Fix $0<\theta<1$, $0\leq k<(1-\theta^2)/d$ and $r_0>0$. Let $0<\theta'<\theta$. Then, $0\leq k<(1-\theta'^2)/d$ as well. For $t\geq 0$ consider the ball $B_t:=B(x_t\mathbf{e},r_t)$, where $x_t=\theta'\sqrt{2\beta}t$, $r_t=r_0\,e^{-\beta kt}$, and $\mathbf{e}=(1,0,\ldots,0)$ is the unit vector in the direction of the first coordinate. Then, by Theorem~\ref{theorem1}, 
$$P(Z_t(B_t)=0)=\exp\left[-\beta\,I(\theta',k,0)+o(t)\right].$$
Since $\{Z_t(B_t)=0\}\subseteq A_t^r= \left\{\text{supp}(Z(t))\:\: \text{is not $r_t$-dense in $B(0,\theta\sqrt{2\beta}t)$}\right\}$ for all large $t$, it follows that
$$\underset{t\rightarrow\infty}{\liminf}\:\frac{1}{t}\log P\left(A_t^r\right)\geq -\beta I(\theta',k,0).$$
Let $\theta'\to\theta$ and use the continuity of $I(\theta',k,0)$ in $\theta'$ to complete the proof.

\subsection{Theorem~\ref{theorem2} -- Proof of the upper bound}

Fix $0<\theta<1$, $0\leq k<(1-\theta^2)/d$ and $r_0>0$, and for $t\geq 0$ set $r_t=r_0\,e^{-\beta kt}$. Throughout this subsection, we use
$$B_t:=B(\theta\sqrt{2\beta}t\mathbf{e},r_0), \quad \mathcal{B}_t:=B(0,\theta\sqrt{2\beta}t)  .$$ 
The proof is broken into three parts for better readability. The first two parts are on the $r_t$-density of BBM only within $B_t$. The last part extends the $r_t$-density of BBM to the entire subcritical ball $\mathcal{B}_t$. In the rest of the proof, fix the dimension $d$, and let
\begin{equation}
n_t:=\left\lceil 2\sqrt{d}\,e^{\beta kt} \right\rceil^d. \label{nd}
\end{equation}

\subsubsection{Part I: Any $n_t$-collection of balls within $B_t$}
Let $\left(x_j:1\leq j\leq n_t\right)$ be any collection of $n_t$ points in $B_t$, where we suppress the $t$-dependence of $x_j$ for ease of notation. For each $j$, define $B_t^j:=B(x_j,r_t/(2\sqrt{d}))$ so that each $B_t^j$ is a ball with radius $r_t/(2\sqrt{d})$ and center lying in $B_t$.

We split the time interval $[0,t]$ into two pieces at $\rho t$, $\rho \in [0,1]$, which is the instant at which the total mass exceeds $\lfloor t \rfloor$. In the first piece, the branching is partially suppressed to give polynomially many particles only, which has an exponential probabilistic cost; whereas we are able to keep all of these particles close enough to the origin (at sublinear distance) at no cost since there are not exponentially many of them. In the second piece, given that we now have $\lfloor t \rfloor$ particles close enough to the origin, we argue that with overwhelming probability, for each $j$, $1\leq j\leq n_t$, there is at least one of these particles such that the sub-BBM it initiates at time $\rho t$ contributes a particle to $B_t^j$ at time $t$. To catch the optimal $\rho$, we discretize $[0,t]$ into many small pieces, and condition the process on in which piece $\rho t$ falls, which results in a sum of terms, of which only the largest contributes on a logarithmic scale. 

For $t>0$ and $1\leq j\leq n_t$, define the events
$$A_t^j:=\{Z_t(B_t^j)=0\},\quad A_t:=\bigcup_{1\leq j \leq n_t}A_t^j.$$
Observe that $A_t$ is the event that at least one $B_t^j$ is empty at time $t$.

Recall that $N_t=Z_t(\mathbb{R}^d)$, and for $t>1$ define the random variable
\begin{equation} \rho_t=\sup\left\{\rho \in [0,1]:N_{\rho t}\leq \lfloor t\rfloor \right\}. \nonumber
\end{equation}
Observe that for $x\in[0,1]$, we have $\{\rho_t\geq x \}\subseteq \{N_{xt}\leq \lfloor t\rfloor+1 \}$. We start by conditioning on $\rho_t$. Recall the definition of $\bar\rho$ from \eqref{rhobar} and set
$$\bar{\rho}:=\bar{\rho}(\theta,k,0)=1-(kd)/2-\sqrt{(kd/2)^2+\theta^2} .$$
Note that $\bar{\rho}>0$ since $kd<1-\theta^2$. Choose $n_0\in\mathbb{N}$ large enough so that $\lfloor \bar\rho n_0-1\rfloor-1\geq 0$. 
Then, for every $n\geq n_0$,
\begin{align} P(A_t)&\:=\sum_{i=0}^{\lfloor \bar\rho n-1\rfloor-1} P\left(A_t\cap \left\{\frac{i}{n}\leq \rho_t <\frac{i+1}{n}\right\} \right)+ P\left(A_t\cap \left\{\rho_t\geq \frac{\lfloor \bar\rho n-1\rfloor}{n}\right\} \right) \nonumber \\
&\:\:\leq \sum_{i=0}^{\lfloor \bar\rho n-1\rfloor-1} \exp\left[-\beta \frac{i}{n}t+o(t)\right]P^{(i,n)}_t\left(A_t\right)+\exp\left[-\beta \left(\bar\rho-\frac{2}{n}\right) t+o(t)\right], \label{eq21}
\end{align}
where we use \eqref{eqprop}, which implies $P(N_{(i/n)t}\leq \lfloor t\rfloor+1)=\exp[-\beta(i/n)t+o(t)]$, to control \newline
$P(\frac{i}{n}\leq \rho_t <\frac{i+1}{n})$, and introduce the conditional probabilities
\begin{equation} P^{(i,n)}_t(\cdot)=P\left(\ \cdot \ \middle| \ \frac{i}{n}\leq \rho_t <\frac{i+1}{n}\right), \quad i=0,1,\ldots,\lfloor \bar\rho n-1\rfloor-1. \nonumber
\end{equation}

For each pair $(i,n)$, where $n\geq n_0$ and $i=0,1,\ldots,\lfloor \bar\rho n-1\rfloor-1$, define the interval 
$$I^{(i,n)}:=[i/n,(i+1)/n),$$
and the radius
\begin{equation}
r_t^{(i,n)}:=\sqrt{2\beta}\left(\sqrt{\left(1-\frac{i+1}{n}\right)^2-kd\left(1-\frac{i+1}{n}\right)}-\theta-\varepsilon\right)t,\label{eqradius}
\end{equation}
where $\varepsilon=\varepsilon(n)>0$ is chosen small enough so that \eqref{eqradius} is positive for each $i=0,1,\ldots,\lfloor \bar\rho n-1\rfloor-1$. By definition of $\rho_t$, conditional on the event $\rho_t\in I^{(i,n)}$, there exists an instant in $[ti/n,t(i+1)/n)$, namely $\rho_t t$, at which there are exactly $\lfloor t\rfloor+1$ particles in the system. Let $E_t^{(i,n)}$ be the event that among the $\lfloor t\rfloor+1$ particles alive at $\rho_t t$, there is at least one outside $B_t^{(i,n)}:=B\left(0,r_t^{(i,n)}\right)$. Estimate
\begin{equation} P^{(i,n)}_t\left(A_t\right)\leq P^{(i,n)}_t\left(E_t^{(i,n)}\right)+P^{(i,n)}_t\left(A_t \mid [E_t^{(i,n)}]^c\right). \label{eq22}
\end{equation}
If the event $E_t^{(i,n)}$ occurs, then at least one among $\lfloor t \rfloor+1$ many particles has escaped $B_t^{(i,n)}$ by time at most $t(i+1)/n$. Note that each particle alive at time $s$ is at a random point, whose spatial distribution is identical to that of $X(s)$. Therefore, by Proposition B and the union bound, we have
\begin{equation} P^{(i,n)}_t\left(E_t^{(i,n)}\right)\leq \: (\lfloor t \rfloor+1)\,\exp\left[-\frac{\left(r_t^{(i,n)}\right)^2}{2(i+1)/n}t+o(t)\right]
= \: \exp\left[-\frac{\left(r_t^{(i,n)}\right)^2}{2(i+1)/n}t+o(t)\right] .  \label{eq23}
\end{equation} 
Now consider the second term on the right-hand side of (\ref{eq22}). Choose $\ell$ from $\{1,2,\ldots,n_t\}$ such that $|x_\ell|\geq |x_j|$ for all $1\leq j\leq n_t$. Then, letting $A_t^\ell=\{Z_t(B_t^\ell)=0\}$, it follows from $A_t=\bigcup_{1\leq j\leq n_t}A_t^j$, the union bound and Lemma~\ref{lemma1} that
\begin{equation}
P^{(i,n)}_t\left(A_t \mid [E_t^{(i,n)}]^c\right) \leq n_t\, P^{(i,n)}_t\left(A_t^\ell \mid [E_t^{(i,n)}]^c\right). \label{kedibaba2}
\end{equation}
On the event $[E_t^{(i,n)}]^c$, there are $\lfloor t\rfloor+1$ particles in $B_t^{(i,n)}$ at time $\rho_t t$. Then, conditional on $\rho_t\in I^{(i,n)}$, Lemma~\ref{lemma2} and \eqref{eqradius} imply that with overwhelming probability, the sub-BBM emanating from each such particle at time $\rho_t t$ evolves in the remaining time of length at least $(1-(i+1)/n)t$ to contribute at least one particle to $B_t^\ell$ at time $t$. This is due to
\begin{equation}
0<\left(1-\frac{i+1}{n}\right)-\frac{\left(\sqrt{\left(1-\frac{i+1}{n}\right)^2-kd\left(1-\frac{i+1}{n}\right)}-\varepsilon\right)^2}{1-\frac{i+1}{n}}-kd, \label{eq25}
\end{equation}
that is, the distance between $B_t^\ell$ and the starting point of the sub-BBM is too short for the sub-BBM to contribute no particles to $B_t^\ell$ at time $t$. More precisely, let $p_t^y$ be the probability that a BBM starting with a single particle at position $y\in\mathbb{R}^d$ contributes no particles to $B_t^\ell$ at time $t$. Then, conditional on $\rho_t\in I^{(i,n)}$ (since $\rho_t t<t(i+1)/n$), by Lemma~\ref{lemma2} and \eqref{eq25}, uniformly over $y\in B_t^{(i,n)}$, there exists $c>0$ and $t_0$ such that
$$p_{t(1-\rho_t)}^y \leq e^{-ct} \quad \text{for all} \quad t\geq t_0. $$   
Hence, by the strong Markov property applied at time $\rho_t t$ and the independence of particles present at that time, for all large $t$ we have
\begin{equation} P^{(i,n)}_t\left(A_t^\ell \mid [E_t^{(i,n)}]^c\right)\leq \left(e^{-ct}\right)^{\lfloor t\rfloor+1}\leq e^{-ct^2}, \label{kedibaba}
\end{equation}
which is SES in $t$. It follows from \eqref{kedibaba2} and \eqref{kedibaba} that
\begin{equation}
P^{(i,n)}_t\left(A_t \mid [E_t^{(i,n)}]^c\right) \leq n_t\,e^{-ct^2}=\left\lceil 2\sqrt{d}\,e^{\beta kt} \right\rceil^d \,e^{-ct^2}=e^{-ct^2+o(t^2)}. \label{kedibaba3}
\end{equation}
From \eqref{eqradius}, \eqref{eq22}, \eqref{eq23} and \eqref{kedibaba3}, we obtain
\begin{equation} P^{(i,n)}_t\left(A_t\right) \leq \exp\left[-\beta\,\frac{\left(\sqrt{\left(1-\frac{i+1}{n}\right)^2-kd\left(1-\frac{i+1}{n}\right)}-\theta-\varepsilon\right)^2}{(i+1)/n}\,t+o(t)\right]+\exp\left[-ct^2+o\left(t^2\right)\right]. \label{eq24}
\end{equation}
Substituting \eqref{eq24} into \eqref{eq21}, and optimizing over $i\in\left\{0,1,\ldots,\lfloor \bar\rho n-1\rfloor-1\right\}$ gives
\begin{align} &\underset{t\rightarrow \infty}{\limsup}\:\frac{1}{t}\log P\left(A_t\right) 
\leq \nonumber \\
& -\beta\left[\underset{i\in\left\{0,1,\ldots,\lfloor \bar\rho n-1\rfloor-1\right\}}{\min}
\left\{\frac{i}{n}+\frac{\left(\sqrt{\left(1-\frac{i+1}{n}\right)^2-kd\left(1-\frac{i+1}{n}\right)}-\theta-\varepsilon\right)^2}{(i+1)/n}\right\}\wedge\left(\bar\rho-\frac{2}{n}\right)\right], \label{big}
\end{align}
where we use $a\wedge b$ to denote the minimum of $a$ and $b$. Now first let $\varepsilon \rightarrow 0$, then set $\rho=i/n$, let $n\rightarrow \infty$, and use the continuity of the functional form from which the minimum is taken to obtain
\begin{equation} \underset{t\rightarrow\infty}{\limsup}\:\frac{1}{t}\log P\left(A_t\right)\leq -\beta\underset{\rho\in(0,\bar\rho]}{\inf}\left[\rho+\frac{\sqrt{(1-\rho)^2-kd(1-\rho)}-\theta}{\rho}\right]=-\beta I(\theta,k,0).  \label{big2}
\end{equation}
(Note that we have not written the last term on the right-hand side of \eqref{big} explicitly in \eqref{big2}, because once $n\rightarrow \infty$, this term becomes $\bar\rho$, which is attained by the function inside the infimum on the right-hand side of \eqref{big2} if we set $\rho=\bar\rho$.)

\begin{remark}  We note that applying the union bound on $P\left(\cup_{1\leq j\leq n_t}A_t^j\right)$ naively along with Theorem~\ref{theorem1} does not yield the desired upper bound. Indeed, this argument gives
\begin{equation} P(A_t)=P\left(\cup_{1\leq j\leq n_t}A_t^j\right)\leq n_t\,P(A_t^\ell)=\exp[-\beta t(I(\theta,k,0)-kd)+o(t)].
\end{equation}
\end{remark}

\subsubsection{Part II: Choosing the $n_t$-collection of balls within $B_t$}
We now choose the collection of $n_t$ points $\left(x_j:1\leq j\leq n_t\right)$ in $B_t$ in a useful way. Let $C(0,r_0)$ be the cube centered at the origin with side length $2r_0$ so that $B(0,r_0)$ is inscribed in $C(0,r_0)$. Consider the simple cubic packing of $C(0,r_0)$ with balls of radius $r_0 e^{-\beta k t}/(2\sqrt{d})$. Then, at most $n_t=\left\lceil 2\sqrt{d}\,e^{\beta kt} \right\rceil^d$ balls are needed to completely pack $C(0,r_0)$, say with centers $\left(y_j:1\leq j\leq n_t\right)$. For each $j\in\{1,2,\ldots,n_t\}$, choose $z_j=y_j$ if $y_j\in B(0,r_0)$; otherwise, let $z_j=0$. In a simple cubic packing, the distance between a point in space and its closest packing ball's farthest point (we consider the farthest point to cover the worst case scenario, corresponding to $Z$ hitting the farthest point of the closest $B_t^j$), is less than the distance between the center and any vertex of a $d$-dimensional cube with side length four times the radius of a packing ball. In this case, four times the radius of a packing ball is $2r_0 e^{-\beta k t}/\sqrt{d}$. Then, since the distance between the center and any vertex of the $d$-dimensional unit cube is $\sqrt{d}/2$, it follows that for any $x\in B(0,r_0)$, there exists $z_j$ with 
$$\underset{y\in B(z_j,r_0 e^{-\beta k t}/(2\sqrt{d}))}{\text{max}} |x-y|<r_0 e^{-\beta kt}.$$
In other words,
$$\underset{1\leq j\leq n_t}{\text{min}}\:\underset{y\in B(z_j,r_0 e^{-\beta k t}/(2\sqrt{d}))}{\text{max}} |x-y|<r_0 e^{-\beta kt}. $$
For each $1\leq j\leq n_t$, let $x_j=z_j+\theta\sqrt{2\beta}t\mathbf{e}$. Then, by translation invariance, for any $x\in B_t$,
$$\underset{1\leq j\leq n_t}{\text{min}}\:\underset{y\in B(x_j,r_0 e^{-\beta k t}/(2\sqrt{d}))}{\text{max}} |x-y|< r_0 e^{-\beta kt}=r_t.$$
Define $$\widehat{A}_t:=\left\{\text{supp}(Z(t))\:\: \text{is not $r_t$-dense in $B_t$}\right\}.$$
Then, with the choice of the collection $\left(x_j:1\leq j\leq n_t\right)$, the event $A_t$ from part I of the proof satisfies $\widehat{A}_t \subseteq A_t$, and \eqref{big2} implies that 
\begin{equation}
\underset{t\rightarrow\infty}{\limsup}\:\frac{1}{t}\log P\left(\widehat{A}_t\right)\leq -\beta I(\theta,k,0). \label{density3}
\end{equation}

\subsubsection{Part III: Extension from $B_t$ to the entire subcritical ball}
By a similar geometric argument as the one in part II of this proof, we now extend the result on the density of BBM in $B_t$ to the density in the entire subcritical ball $\mathcal{B}_t$.

\smallskip

Recall that $\rho_t:=\theta\sqrt{2\beta}t$, and define
$$m_t:=\left\lceil 2\sqrt{d}\, \rho_t \frac{1}{r_0} \right\rceil^d.$$
Let $\left(\bar{x}_j:1\leq j\leq m_t\right)$ be any collection of $m_t$ points in $\mathcal{B}_t:=B(0,\rho_t)$. For each $j$, define $\mathcal{B}^j_t:=B(\bar{x}_j,r_0)$. Next, for $t>0$ and $1\leq j\leq m_t$, define the events
$$E_t^j:=\{\text{supp}(Z(t))\:\: \text{is not $r_t$-dense in $\mathcal{B}^j_t$}\},\quad E_t:=\bigcup_{1\leq j \leq m_t}E_t^j.$$
Recall that $B_t:=B(\rho_t\mathbf{e},r_0)$, and $|\bar{x}_j|\leq \rho_t$ for all $j$. Then, using the union bound, \eqref{density3}, and Lemma~\ref{lemma1}, we can bound the probability that the BBM is not $r_t$-dense in at least one $\mathcal{B}^j_t$ as 
\begin{equation}
P(E_t)=P\left(\bigcup_{1\leq j\leq m_t}E_t^j\right)\leq m_t\,e^{-\beta I(\theta,k,0)+o(t)}=e^{-\beta I(\theta,k,0)+o(t)} \label{babayaro}
\end{equation} 
since $m_t$ is only a polynomial factor.

\smallskip 

We now choose the collection $\left(\bar{x}_j:1\leq j\leq m_t\right)$ in a useful way. Let $C(0,\rho_t)$ be the cube centered at the origin with side length $2\rho_t$. The simple cubic packing of $C(0,\rho_t)$ requires at most $m_t=\left\lceil 2\sqrt{d}\, \rho_t \frac{1}{r_0} \right\rceil^d$ balls of radius $r_0/(2\sqrt{d})$, say with centers $(\bar{y}_j:1\leq j\leq m_t)$. For each $1\leq j\leq m_t$, choose $\bar{x}_j=\bar{y}_j$ if $\bar{y}_j\in B(0,\rho_t)$; otherwise, let $\bar{x}_j=0$. Then, by an argument similar to the one in part II of this proof, one can show that for any $x\in B(0,\rho_t)$,
$$\underset{1\leq j\leq m_t}{\text{min}} |x-\bar{x}_j|<r_0, $$
which implies that
$$B(0,\rho_t)\subseteq \bigcup_{1\leq j\leq m_t} B(\bar{x}_j,r_0).$$
In other words, we are enlarging the packing ball radius from $r_0/(2\sqrt{d})$ to $r_0$ so that every point in $B(0,\rho_t)$ falls inside at least one enlarged ball $B(\bar{x}_j,r_0)$. Then, with the choice of the collection $(\bar{x}_j:1\leq j \leq m_t)$, the event $E_t$ from above satisfies 
$$A_t^r:=\left\{\text{supp}(Z(t))\:\: \text{is not $r_t$-dense in $B(0,\rho_t)$}\right\}\subseteq \bigcup_{1\leq j \leq m_t}E_t^j=E_t,$$
and \eqref{babayaro} implies that
\begin{equation} \underset{t\rightarrow\infty}{\limsup}\:\frac{1}{t}\log P\left(A_t^r\right)\leq -\beta I(\theta,k,0). \nonumber
\end{equation} 
This completes the proof of the upper bound of Theorem~\ref{theorem2}.

\section{Enlargement of BBM}\label{section6}

For a BBM $Z=(Z(t))_{t\geq 0}$, recall the definition of its \emph{$r$-enlargement at time $t$} as
$$Z_t^r:=\bigcup_{x\,\in\,\text{supp}(Z(t))}B(x,r).$$

\subsection{Proof of Corollary~\ref{corollary3}}
Fix $0<\theta<1$, $0\leq k<(1-\theta^2)/d$ and $r_0>0$, and for $t\geq 0$ let $\rho_t=\theta\sqrt{2\beta}t$ and $r_t=r_0\,e^{-\beta kt}$. Observe the equality of events 
$$\{B(0,\rho_t)\subseteq Z_t^{r_t}\}=\left\{\text{supp}(Z(t))\:\: \text{is $r_t$-dense in $B(0,\rho_t)$}\right\}=\left(A_t^r\right)^c.$$
Then, Corollary~\ref{corollary3} can be proved by using \eqref{thm2} in Theorem~\ref{theorem2} via a standard Borel-Cantelli argument similar to the one in the proof of Corollary~\ref{corollary1}. To avoid repetition, we omit the details.

\subsection{Proof of Theorem~\ref{theorem3}}
We will show that for every $\varepsilon>0$ there exist positive constants $c_1$ and $c_2$ such that for all large $t$, 
\begin{equation} 
P\left(\frac{\textsf{vol}\left(Z_t^{r_t}\right)}{t^d}\leq [2\beta(1-kd-\varepsilon)]^{d/2}\omega_d\right)\leq e^{-c_1 t} ,\label{eq301}
\end{equation}
and
\begin{equation} 
P\left(\frac{\textsf{vol}\left(Z_t^{r_t}\right)}{t^d}\geq [2\beta(1-kd+\varepsilon)]^{d/2}\omega_d\right)\leq e^{-c_2 t}.\label{eq302}
\end{equation}
Then, Theorem~\ref{theorem3} will follow from \eqref{eq301} and \eqref{eq302} via a Borel-Cantelli argument similar to the one in the proof of Corollary~\ref{corollary1}. 

Let $\varepsilon>0$ and set $\theta=\theta_1=\sqrt{1-kd-\varepsilon/2}$ in Theorem~\ref{theorem2}, which gives $\rho_t=\theta_1\sqrt{2\beta}t=\sqrt{2\beta(1-kd-\varepsilon/2)}t$. Then, $0\leq k <(1-\theta_1^2)/d=k+\varepsilon/(2d)$ so that Theorem~\ref{theorem2} applies, and gives
$$P(A_t^r)=\exp[-\beta\,I(\theta_1,k,0)t+o(t)] .$$
This proves \eqref{eq301} since for all large $t$, $\left\{\textsf{vol}\left(Z_t^{r_t}\right)/t^d\leq [2\beta(1-kd-\varepsilon)]^{d/2}\omega_d \right\}\subseteq A_t^r$. 

\smallskip

To prove \eqref{eq302}, for $\theta\geq 0$ and $t\geq 0$, let $\mathcal{N}_t^\theta$ be the set of particles outside $B(0,\theta\sqrt{2\beta}t)$ at time $t$. Set $\theta_2=\sqrt{1-kd+\varepsilon/2}$. Then, 
$$E(|\mathcal{N}_t^{\theta_2}|)=\exp[\beta t(1-\theta_2^2)+o(t)]=\exp[\beta t(kd-\varepsilon/2)+o(t)],$$ and the Markov inequality yields
\begin{equation}
P\left(|\mathcal{N}_t^{\theta_2}|\geq\exp[\beta t(kd-\varepsilon/4)]\right)\leq \exp[-\beta t\varepsilon/4+o(t)]. \label{eq304}
\end{equation} 
For an upper bound on $\textsf{vol}\left(Z^{r_t}_t\right)$, suppose that $B(0,\theta_2\sqrt{2\beta}t)\subseteq Z^{r_t}_t$ and that the balls of radius $r_t$ centered at the positions of the particles at time $t$ are all disjoint from one another. Since the volume of a ball of radius $r_t$ is $\omega_d (r_0e^{-\beta kt})^d$, it then follows from \eqref{eq304} that
\begin{equation} P\left(\textsf{vol}\left(Z^{r_t}_t\cap (B(0,\theta_2\sqrt{2\beta}))^c\right)\geq \omega_d r_0^d \exp[-\beta t \varepsilon/4]\right)\leq \exp[-\beta t\varepsilon/4+o(t)], \nonumber
\end{equation}
where we use $A^c$ to denote the complement of a set $A$ in $\mathbb{R}^d$. This implies \eqref{eq302}, and completes the proof of Theorem~\ref{theorem3}.

\bibliographystyle{plain}

\begin{thebibliography}{1}

\bibitem{A2017}
E. A\"{i}dekon, Y. Hu and Z. Shi.
\newblock Large deviations for level sets of a branching Brownian motion and Gaussian free fields.
\newblock {\em Journal of Mathematical Sciences} \textbf{238} (4) (2019) 348 -- 365.

\bibitem{B1992}
J. D. Biggins.
\newblock Uniform convergence in the branching random walk.
\newblock {\em Annals of Probability} \textbf{20} (1992) 137 -- 151.

\bibitem{B1978}
M. Bramson.
\newblock Maximal displacement of branching Brownian motion.
\newblock {\em Communications on Pure and Applied Mathematics} \textbf{31} (5) (1978) 531 -- 581.

\bibitem{CR1988}
B. Chauvin and A. Rouault.
\newblock KPP equation and supercritical branching brownian motion in the subcritical speed area. Application to spatial trees.
\newblock {\em Probability Theory and Related Fields} \textbf{80} (1988) 299 -- 314.

\bibitem{D2017}
B. Derrida and Z. Shi.
\newblock Large deviations for the rightmost position in a branching Brownian motion.
\newblock V. Panov (ed.), {\em Modern Problems of Stochastic Analysis and Statistics}. Springer Proceedings in Mathematics \& Statistics \textbf{208} 303 -- 312. Springer, Cham, 2017.

\bibitem{DV1975}
M. D. Donsker and S. R. S. Varadhan.
\newblock Asymptotics for the Wiener sausage.
\newblock {\em Communications in Pure and Applied Mathematics} \textbf{28} (4) (1975) 525 -- 565.

\bibitem{E2000}
J. Engl\"{a}nder.
\newblock On the volume of the supercritical super-Brownian sausage conditioned on survival.
\newblock {\em Stochastic Processes and their Applications}  \textbf{88} (2000) 225 -- 243.

\bibitem{E2003}
J. Engl\"{a}nder and F. den Hollander.
\newblock Survival asymptotics for branching Brownian motion in a Poissonian trap field.
\newblock {\em Markov Processes and Related Fields} \textbf{9} (2003) 363 -- 389.

\bibitem{E2004}
J. Engl\"{a}nder.
\newblock Large deviations for the growth rate of the support of supercritical super-Brownian motion.
\newblock {\em Statistics and Probability Letters} \textbf{66} (4) (2004) 449 -- 456.

\bibitem{E2008}
J. Engl\"{a}nder.
\newblock Quenched law of large numbers for branching Brownian motion in a random medium.
\newblock {\em Annales de l'Institut Henri Poincar\'e - Probabilit\'es et Statistiques} \textbf{44} (3) (2008) 490 -- 518.

\bibitem{GK2003}
A. Grigor'yan and M. Kelbert.
\newblock Recurrence and transience of branching diffusion processes on Riemannian manifolds.
\newblock {\em Annals of Probability} \textbf{31} (2003) 244 -- 284.

\bibitem{KT1975}
S. Karlin and M. Taylor.
\newblock {\em A First Course in Stochastic Processes}. Academic Press, New York, 1975.

\bibitem{K2005}
A. E. Kyprianou.
\newblock Asymptotic radial speed of the support of supercritical branching Brownian motion and super-Brownian motion in $R^d$.
\newblock{\em Markov Processes and Related Fields} \textbf{11} (2005) 145 -- 156.

\bibitem{L1988}
J. -F. Le Gall.
\newblock Sur une conjecture de M. Kac.
\newblock{\em Probability Theory and Related Fields} \textbf{78} (1988) 389 -- 402.

\bibitem{MK1975}
H. P. McKean.
\newblock Application of Brownian motion to the equation of Kolmogorov-Petrovskii-Piskunov.
\newblock {\em Communications in Pure and Applied Mathematics} \textbf{28} (1975) 323 -- 331.

\bibitem{OCE2017}
M. \"{O}z, M. \c{C}a\u{g}lar and J. Engl\"{a}nder.
\newblock Conditional speed of branching Brownian motion, skeleton decomposition and application to random obstacles. 
\newblock{\em Annales de l'Institut Henri Poincar\'e, Probabilit\'es et Statistiques} \textbf{53} (2) (2017) 842 -- 864.

\bibitem{O2018}
M. \"Oz.
\newblock Large deviations for local mass of branching Brownian motion.
\newblock arXiv:1811.09037

\bibitem{W1967}
S. Watanabe.
\newblock Limit theorems for a class of branching processes, in: J. Chover (Ed.)
\newblock {\em Markov Processes and Potential Theory}. Wiley, New York, (1967) 205 -- 232.

\end{thebibliography}

\end{document}